\begin{document}

\centerline{}

\centerline{}

\centerline{\Large{\bf Properties of $J$-fusion frames in Krein space}}

\centerline{}

\centerline{\bf {Shibashis Karmakar}}

\centerline{Department of Mathematics }

\centerline{Jadavpur University, Jadavpur-32}

\centerline{West Bengal, India.}

\centerline{\textit{email} : shibashiskarmakar@gmail.com}

\centerline{}

\centerline{\bf {Sk. Monowar Hossein}}

\centerline{Department of Mathematics }

\centerline{Aliah University, IIA/27 New Town, Kolkata-156}

\centerline{West Bengal, India.}

\centerline{\textit{email} : sami$\_ $milu@yahoo.co.uk}

\centerline{}

\centerline{\bf {Kallol Paul}}

\centerline{Department of Mathematics }

\centerline{Jadavpur University, Jadavpur-32}

\centerline{West Bengal, India.}

\centerline{\textit{email} : kalloldada@gmail.com}

\newtheorem{theorem}{Theorem}[section]
\newtheorem{lemma}[theorem]{Lemma}
\newtheorem{proposition}[theorem]{Proposition}
\newtheorem{corollary}[theorem]{Corollary}
\theoremstyle{definition}
\newtheorem{definition}[theorem]{Definition}
\newtheorem{example}[theorem]{Example}
\newtheorem{exercise}[theorem]{Exercise}
\newtheorem{conclusion}[theorem]{Conclusion}
\newtheorem{conjecture}[theorem]{Conjecture}
\newtheorem{criterion}[theorem]{Criterion}
\newtheorem{summary}[theorem]{Summary}
\newtheorem{axiom}[theorem]{Axiom}
\newtheorem{problem}[theorem]{Problem}
\theoremstyle{remark}
\newtheorem{remark}[theorem]{Remark}
\numberwithin{equation}{section}
\centerline{}


\begin{abstract}
In this paper we characterize $\sqrt{2}$-1-uniform $J$-Parseval fusion frames in a Krein space $\mathbb{K}$. We provide a few results regarding construction of new $J$-tight fusion frame from given $J$-tight fusion frames. We also characterize any uniformly $J$-definite subspace of a Krein space $\mathbb{K}$ in terms of a $J$-fusion frame inequality. Finally we generalize the fundamental identity of frames in Krein space $J$-fusion frame setting.
\end{abstract}

{\bf Mathematics Subject Classification:} 42C15, 46C05, 46C20.

{\bf Keywords:} Krein Space, $J$-orthogonal projection, reduced minimum modulus, Gramian operator.

\section{Introduction}
Frames for Hilbert space is a well known concept. The theory was introduced by Duffin and Schaeffer \cite{ds} in the year 1952. Now frame theory has applications in almost every areas of applied mathematics. With the development of Hilbert space frame theory and its continuous application in Data transmission, Networking and Signal Processing some emerging research areas created. Filter bank theory, packet-based communication system are some of the examples of new research areas in Frame theory. But to study these areas we need distributed processing such as sensor networks \cite{ssrr}. To tackle this problem frames were build ``locally" and then glued together to obtain frames for the whole space. A beautiful approach was introduced in \cite{pggk} that gives a general method for adding together local frames to get global frames. This powerful method was introduced by Casazza and Kutyniok in \cite{pggk} which they called Frames of subspaces. Later they agree on a terminology of Fusion frames. This concept provides a useful framework in modeling sensor networks \cite{pggkslcj}. Different aspects and applications of fusion frame can be seen in \cite{pgahgku,acmfidgm,pbgkhra,pgmfdmzz,pmdst,pgmfi,pgmfdgywz,pbgkhr,bggk,gkaprctl,pgaksl,akkm,apgkar,mads,pggkslcj,pga}.

Since fusion frame in Hilbert space has such a huge application so it is a natural demand to extend these ideas in Krein space frame theory. Some work already had been done in this direction \cite{akbk,akbkh,ws}. Krein space has some interesting application in modern analysis. Some known areas of application are in High energy physics, Quantam cosmology, Krein space filtering etc. The theory of frames in Krein space can be found in \cite{jb,gmmm,koe,isty,sksk}. P. Acosta-Hum$\acute{a}$nez \cite{paef} defined fusion frames in Krein spaces. In their work they found a correspondence between fusion frames in Hilbert spaces and fusion frames in Krein spaces. But their definition involves fundamental symmetry in Krein space which is not unique. In \cite{sk} we defined fusion frame in Krein spaces in a more geometric setting motivated by the work of Giribet et al. \cite{gmmm}, and we called them $J$-fusion frames.

 In this paper we are interested to investigate the properties of $J$-fusion frame in Krein space frame theory.

\section{Preliminary Notes}
In this section we briefly recall some basic notations, definitions and some important properties useful for our further study. For more detailed information we refer the following references \cite{jb,isty,gmmm,pggk,msak,mads,pga,sksk}.

Let $\pi_M$ be the orthogonal projection from the Hilbert space $\mathbb{H}$ onto the subspace $M$ of $\mathbb{H}$. Then the range space of the projection is $M$ \textit{i.e.} $R(\pi_M)=M$ and the null space of this orthogonal projection is $M^{\perp}$ \textit{i.e.} $N(\pi_M)=M^{\perp}$.
\begin{definition}
Let $I$ be some index set and $\{W_i:i\in{I}\}$ be a family of closed subspaces in $\mathbb{H}$. Also let $\{v_i:i\in{I}\}$ be a family of weights \textit{i.e.} $v_i>0~\forall{i\in{I}}$. Then $\{(W_i,v_i):i\in{I}\}$ is a fusion frame if there exist constants $0<C\leq D<\infty$ such that
	\begin{equation}
	C\|f\|^2 \leq \sum_{i\in I} v_i^2|[\pi_{W_i}f,f]| \leq D \|f\|^2 ~ \textmd{~for every $f\in{\mathbb{H}}$}
	\end{equation}  
\end{definition}
$C$ and $D$ are known as lower and upper bounds respectively for the fusion frame. If $C=D$ then the fusion frame is known as $C$-tight fusion frame and if $C=D=1$ then the fusion frame is known as Parseval fusion frame. Moreover, a fusion frame is called $v$-uniform, if $v:=v_i=v_j$ for all $i,~j \in I$. The family of subspaces $\{W_i:i\in{I}\}$ is an orthonormal basis of subspaces if $\mathbb{H}=\oplus_{i\in{I}}{W_i}$.

Let $\pi_M$ be an orthogonal projection in a Krein space $\mathbb{K}$ onto $M$. Then $\pi_M^2=\pi_M$ and $\pi_M^*=\pi_M$. Here range of the projection \textit{i.e.} $R(\pi_M)=M$ and Null space of $\pi_M$ \textit{i.e.} $N(\pi_M)=M^{\perp}$. Let $Q_{M}$ be the $J$-orthogonal projection onto $M$. But the $J$-orthogonal projection $Q_M$ exists if $M$ is a projectively complete subspace of $\mathbb{K}$. So in general we do not have any relation between $\pi_M$ and $Q_M$. Let $\pi_M^{\#}$ be the $J$-adjoint of $\pi_M$. Then we have $\pi_M^{\#}=J\pi_M{J}$. Also $\pi_{JM}=J\pi_M{J}$. Hence $\pi_{JM}=\pi_M^{\#}$.

Let $\{W_i:i\in I\}$ be a collection of non-indefinite subspaces of the Krein space $\mathbb{K}$. We consider the space $\big(\sum_{i\in{I}}\oplus{W_i}\big)$. Then if $f\in\big(\sum_{i\in{I}}\oplus{W_i}\big)$ then $f=\{f_i\}_{i\in I}$, where $f_i\in W_i$ for each $i\in I$. Let $I_+=\{i\in{I}:[f_i,f_i]\geq 0~\textmd{for all~} f_i\in{W_i}\}$ and $I_-=\{i\in{I}:[f_i,f_i]<0~\textmd{for all~} f_i\in{W_i}\}$. We define $[f,g]=\sum_{i\in{I}}[f_i,g_i]$, where $f,g\in\big(\sum_{i\in{I}}\oplus{W_i}\big)$. If the series is unconditionally convergent then $[\cdot,\cdot]$ defines an inner product on $\big(\sum_{i\in{I}}\oplus{W_i}\big)$. 
\subsection{Definition of $J$-fusion frame}
Let $I$ be some index set and let $\{v_i:i\in{I}\}$ be a family weights \textit{i.e.} $v_i>0~\forall~{i\in{I}}$. Let $\{W_i:i\in{I}\}$ be a Bessel family of closed non-indefinite subspaces of a Krein space $\mathbb{K}$ with synthesis operator $T_{W,v}\in{L\Big(\big(\sum_{i\in{I}}\oplus{W_i}\big)_{\ell_2},\mathbb{K}\Big)}$. Let $I_+=\{i\in{I}:[f_i,f_i]\geq 0~\textmd{for all~} f_i\in{W_i}\}$ and $I_-=\{i\in{I}:[f_i,f_i]<0~\textmd{~for all~} f_i\in{W_i}\}$. Now consider the orthogonal decomposition of $\big(\sum_{i\in{I}}\oplus{W_i}\big)_{\ell_2}$ given by
$$\big(\sum_{i\in{I}}\oplus{W_i}\big)_{\ell_2}=\big(\sum_{i\in{I_+}}\oplus{W_i}\big)_{\ell_2}\bigoplus{\big(\sum_{i\in{I_-}}\oplus{W_i}\big)_{\ell_2}},$$
and denote by $P_{\pm}$ the orthogonal projection onto $(\sum_{i\in{I_{\pm}}}\oplus{W_i})_{\ell_2}$. Also, let ${T_{W,v}}_{\pm}=T_{W,v}P_{\pm}$. If $M_{\pm}=\overline{\sum_{i\in{I_{\pm}}}W_i}$, notice that $\sum_{i\in{I_{\pm}}}W_i\subseteq{R({T_{W,v}}_{\pm})}\subseteq{M_{\pm}}$ and
$$R(T_{W,v})=R({T_{W,v}}_+)+R({T_{W,v}}_-)$$

The Bessel family $\mathbb{F}=\{(W_i,v_i):i\in{I}\}$ is a $J$-fusion frame for $\mathbb{K}$ if $R({T_{W,v}}_+)$ is a maximal uniformly $J$-positive subspace of $\mathbb{K}$ and $R({T_{W,v}}_-)$ is a maximal uniformly $J$-negative subspace of $\mathbb{K}$.

Let $\{(W_i,v_i):i\in{I}\}$ be a $J$-fusion frame for $\mathbb{K}$ then $\big(\sum_{i\in{I}}\oplus{W_i},[\cdot,\cdot]\big)$ is a Krein space. The fundamental symmetry let $J_2$ is defined by $J_2(f)=\{f_i:i\in I_+\}\cup\{-f_i:i\in I_-\}$ for all $f$. Also $[f,g]_{J_2}=\sum_{i\in{I_+}}[f_i,g_i]-\sum_{i\in{I_-}}[f_i,g_i]$. Now consider the space $\big(\sum_{i\in{I}}\oplus{W_i}\big)_{\ell_2}=\Big\{f:\big(\sum_{i\in{I}}\oplus{W_i}\big):\sum_{i\in{I}}\|f_i\|_J^2<\infty\Big\}$. We will use this space frequently in our work.
\begin{theorem}
Let $\mathbb{F}=\{(W_i,v_i)\}_{i\in{I}}$ be a $J$-fusion frame for $\mathbb{K}$. Then $\mathbb{F}_{\pm}=\{(W_i,v_i)\}_{i\in{I}_{\pm}}$ is fusion frame for the Hilbert space $(M_{\pm},\pm[\cdot,\cdot])$ \textit{i.e.} there exists constants $B_-\leq{A_-}<0<A_+\leq{B_+}$ such that
\begin{equation}\label{JFFEQ}
A_{\pm}[f,f]\leq\sum_{i\in{I_{\pm}}}v_i^2|[{\pi_{W_i}|}_{M_\pm}(f),f]|~{\leq}~B_{\pm}[f,f]~\textmd{for every }f\in{M_\pm}
\end{equation} 
\end{theorem}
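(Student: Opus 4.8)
The plan is to split $\mathbb{F}$ into its positive and negative parts and recognize each as an ordinary Hilbert‑space fusion frame. The first thing I would do is pin down the two Hilbert spaces involved. Since $\mathbb{F}$ is a $J$‑fusion frame, $R({T_{W,v}}_+)$ is a maximal uniformly $J$‑positive subspace of $\mathbb{K}$; the closure of a uniformly $J$‑positive subspace is again uniformly $J$‑positive with the same positivity constant (because $[\cdot,\cdot]$ is $\|\cdot\|_J$‑continuous), so maximality forces $R({T_{W,v}}_+)$ to be closed. Taking closures in $\sum_{i\in I_+}W_i\subseteq R({T_{W,v}}_+)\subseteq M_+=\overline{\sum_{i\in I_+}W_i}$ then gives $M_+=R({T_{W,v}}_+)$, and symmetrically $M_-=R({T_{W,v}}_-)$. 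Hence $M_+$ (resp. $M_-$) is a closed uniformly $J$‑positive (resp. $J$‑negative) subspace, so $(M_+,[\cdot,\cdot])$ and $(M_-,-[\cdot,\cdot])$ are Hilbert spaces whose norms are equivalent to $\|\cdot\|_J$ restricted to $M_\pm$; each $W_i$ with $i\in I_\pm$ is a closed subspace of the Hilbert space $M_\pm$, and $\pi_{W_i}|_{M_\pm}$ denotes the orthogonal projection of $M_\pm$ onto it.

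Next I would set up the synthesis operator of the candidate fusion frame $\{(W_i,v_i)\}_{i\in I_\pm}$ inside $M_\pm$. Equip $\big(\sum_{i\in I_\pm}\oplus W_i\big)_{\ell_2}$ with the inner product $\pm[\cdot,\cdot]$ inherited from $\pm[\cdot,\cdot]_{J_2}$; since each $W_i$ ($i\in I_\pm$) sits inside the uniformly $J$‑definite $M_\pm$, this is positive definite with norm equivalent to $\big(\sum_i\|f_i\|_J^2\big)^{1/2}$, so it is a Hilbert space. The synthesis operator of $\{(W_i,v_i)\}_{i\in I_\pm}$ for $M_\pm$ is then exactly ${T_{W,v}}_\pm=T_{W,v}P_\pm$ restricted to this space; it is bounded (a consequence of the Bessel hypothesis on $\mathbb{F}$) and, by the previous paragraph, surjective onto $M_\pm$. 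At this point I would invoke the standard Hilbert‑space fact (see e.g. \cite{pggk}) that a Bessel family of closed subspaces is a fusion frame precisely when its synthesis operator is surjective; equivalently, the frame operator $S_\pm:={T_{W,v}}_\pm({T_{W,v}}_\pm)^{*}$, with the adjoint taken between the two Hilbert structures just described, is bounded, positive and boundedly invertible on $M_\pm$.

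It then remains to read off the displayed inequality. A direct computation of the adjoint gives $({T_{W,v}}_\pm)^{*}f=\{v_i\,\pi_{W_i}|_{M_\pm}(f)\}_{i\in I_\pm}$ for $f\in M_\pm$, using the defining property $\pm[\pi_{W_i}|_{M_\pm}(f),g_i]=\pm[f,g_i]$ of the orthogonal projection in $M_\pm$. Consequently
$$\pm[S_\pm f,f]=\big\|({T_{W,v}}_\pm)^{*}f\big\|^2=\sum_{i\in I_\pm}v_i^2\,\big(\pm[\pi_{W_i}|_{M_\pm}(f),f]\big)=\sum_{i\in I_\pm}v_i^2\,\big|[\pi_{W_i}|_{M_\pm}(f),f]\big|,$$
where the last equality uses $[\pi_{W_i}|_{M_\pm}(f),f]=[\pi_{W_i}|_{M_\pm}(f),\pi_{W_i}|_{M_\pm}(f)]$, whose modulus equals $\pm$ itself because $\pm[\cdot,\cdot]$ is positive definite on $M_\pm$. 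Boundedness and bounded invertibility of $S_\pm$ then yield constants $0<\gamma_\pm\le\Gamma_\pm$ with $\gamma_\pm(\pm[f,f])\le\pm[S_\pm f,f]\le\Gamma_\pm(\pm[f,f])$ on $M_\pm$. For $f\in M_+$ this is exactly \eqref{JFFEQ} with $A_+=\gamma_+$, $B_+=\Gamma_+$; for $f\in M_-$ one multiplies through by $-1$, which reverses the inequalities and turns the positive pair into $B_-\le A_-<0$, matching the asserted ordering $B_-\le A_-<0<A_+\le B_+$.

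The substantive step is the first one — that $R({T_{W,v}}_\pm)$ is closed, hence equals $M_\pm$. This is the place where maximal $J$‑definiteness in the definition of a $J$‑fusion frame is genuinely used, and it is precisely what upgrades the automatic Bessel (upper) bound to a real lower fusion‑frame bound. Everything afterwards — the adjoint identity, keeping straight which of $[\cdot,\cdot]$, $-[\cdot,\cdot]$ and $\langle\cdot,\cdot\rangle_J$ is in force on each space, and the sign bookkeeping for $A_-,B_-$ — is routine but must be carried out with some care.
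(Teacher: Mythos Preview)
The paper does not actually prove this theorem: it is stated in the preliminaries (Section~2.1) without a proof environment, alongside several other results imported from the authors' earlier work \cite{sk} where $J$-fusion frames were introduced. So there is no ``paper's own proof'' to compare against.

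Your argument is correct and is the natural one. The decisive observation is exactly the one you isolate: maximality of the uniformly $J$-definite subspace $R({T_{W,v}}_\pm)$ forces it to be closed (otherwise its closure would be a strictly larger uniformly $J$-definite subspace), hence $R({T_{W,v}}_\pm)=M_\pm$; the synthesis operator of $\{(W_i,v_i)\}_{i\in I_\pm}$ is therefore bounded and onto the Hilbert space $(M_\pm,\pm[\cdot,\cdot])$, which is the standard characterization of a fusion frame. The adjoint and sign computations are handled correctly.

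One small remark worth making explicit in a write-up: the symbol ${\pi_{W_i}|}_{M_\pm}$ in the statement has to be read---as you do---as the orthogonal projection of the Hilbert space $(M_\pm,\pm[\cdot,\cdot])$ onto $W_i$, not merely the restriction to $M_\pm$ of the $\|\cdot\|_J$-orthogonal projection $\pi_{W_i}$ on $\mathbb K$. These do not coincide in general (the two inner products $[\cdot,\cdot]$ and $[\cdot,\cdot]_J$ induce different orthogonal complements inside $M_\pm$), and it is the former interpretation that makes your adjoint identity $({T_{W,v}}_\pm)^{*}f=\{v_i\,{\pi_{W_i}|}_{M_\pm}(f)\}_{i\in I_\pm}$ valid and the displayed inequality the genuine fusion-frame inequality for $(M_\pm,\pm[\cdot,\cdot])$.
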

\begin{theorem}
	For each $i\in{I}$, let $v_i>0$ and $\{W_i:i\in{I}\}$ be a collection of definite subspaces also let $\{f_{ij}\}_{j\in{J_i}}$ be a $J$-frame sequence in $\mathbb{K}$. Define $W_i=\overline{span}_{j\in{J_i}}\{f_{ij}\}$ for all $i\in{I}$. Then the following conditions are equivalent.\\
	
	$(i)$ $\{v_if_{ij}\}_{i\in{I},j\in{J_i}}$ is a $J$-frame for $\mathbb{K}$.

	$(ii)$ $\{(W_i,v_i):i\in{I}\}$ is a $J$-fusion frame of subspaces for $\mathbb{K}$.
	
\end{theorem}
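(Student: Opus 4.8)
The plan is to compare the synthesis operators of the two systems and to transfer the problem to the maximal uniformly definite subspaces $M_+$ and $M_-$, on which the $J$-inner product is (up to sign) a genuine Hilbert inner product; there the statement reduces to the classical correspondence between frames and fusion frames. The feature that makes the reduction clean is that each $W_i$ is definite, so $W_i\subseteq M_+$ precisely when $i\in I_+$ and $W_i\subseteq M_-$ precisely when $i\in I_-$; consequently the split of the vector index set $\{(i,j):i\in I,\ j\in J_i\}$ into positive and negative coordinates matches the split $I=I_+\cup I_-$ of the fusion index set, and the whole argument proceeds ``one sign at a time''.

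Concretely, I would introduce the blockwise synthesis operator $\Phi:\big(\sum_{i\in I}\oplus\ell_2(J_i)\big)_{\ell_2}\to\big(\sum_{i\in I}\oplus W_i\big)_{\ell_2}$, $\Phi(\{c_{ij}\})=\big\{\sum_{j\in J_i}c_{ij}f_{ij}\big\}_{i\in I}$, so that the synthesis operator $T$ of $\{v_if_{ij}\}$ factors as $T=T_{W,v}\,\Phi$. Since the $W_i$ are definite, $\Phi$ carries the positive (resp. negative) coordinate subspace into the corresponding summand, hence intertwines the orthogonal projections $P_\pm$ and gives $T_\pm={T_{W,v}}_\pm\,\Phi$. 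From this, on one hand $R(T_\pm)\subseteq R({T_{W,v}}_\pm)\subseteq M_\pm$, and on the other, since $v_if_{ij}\in R(T_\pm)$ for $i\in I_\pm$ and $\overline{\sum_{i\in I_\pm}W_i}=M_\pm$, we get $\overline{R(T_\pm)}=M_\pm$ unconditionally.

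Now the two implications. For $(ii)\Rightarrow(i)$: if $R({T_{W,v}}_+)$ and $R({T_{W,v}}_-)$ are the maximal uniformly definite subspaces, they are in particular closed, so being squeezed between their dense subspaces $M_\pm$ and $M_\pm$ they equal $M_\pm$; using that each $\{f_{ij}\}_j$ is an ordinary frame for the Hilbert space $W_i$ (this is where the definiteness of $W_i$ enters) with bounds uniform in $i$, one shows $\Phi$ is onto, whence $R(T_\pm)={T_{W,v}}_\pm R(\Phi)=R({T_{W,v}}_\pm)$ are the required maximal definite subspaces and $\{v_if_{ij}\}$ is a $J$-frame for $\mathbb{K}$. (Alternatively, one may read $(ii)\Rightarrow(i)$ off the preceding theorem, which exhibits $\mathbb{F}_\pm$ as fusion frames on the Hilbert spaces $M_\pm$, together with the Hilbert-space frame/fusion-frame correspondence applied on each $M_\pm$.) For $(i)\Rightarrow(ii)$: if $R(T_+)$ is maximal uniformly $J$-positive it is closed, so $R(T_+)=\overline{R(T_+)}=M_+$; then $M_+=R(T_+)\subseteq R({T_{W,v}}_+)\subseteq M_+$ forces $R({T_{W,v}}_+)=M_+$, which is therefore maximal uniformly $J$-positive, and symmetrically $R({T_{W,v}}_-)=M_-$ is maximal uniformly $J$-negative, so $\{(W_i,v_i)\}$ is a $J$-fusion frame.

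The step I expect to be the genuine obstacle is the behaviour of $\Phi$ needed in $(ii)\Rightarrow(i)$: its boundedness forces the Bessel bounds of the sequences $\{f_{ij}\}_j$ to be uniformly bounded in $i$, and its surjectivity forces the lower frame bounds of the $\{f_{ij}\}_j$ to be bounded away from $0$. I would either verify that these uniformities are already implied by the standing Bessel/frame hypothesis on $\{v_if_{ij}\}$ together with the weights $v_i$, or record them as the precise content of ``$J$-frame sequence'' in this statement. Everything else — that a subspace of a uniformly $J$-definite subspace is again uniformly $J$-definite, that a maximal uniformly $J$-definite subspace is closed, and the bookkeeping with ranges of products of bounded operators — is routine.
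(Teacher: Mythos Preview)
The paper does not actually supply a proof of this theorem: it is stated in Section~2 (Preliminary Notes) together with the other structural facts about $J$-fusion frames, all of which are recalled from \cite{sk} without argument. There is therefore no ``paper's own proof'' against which to compare your proposal.

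That said, your strategy is the natural one and is essentially the Casazza--Kutyniok argument from \cite{pggk} transported to the Krein setting via the decomposition into $M_+$ and $M_-$. The factorization $T=T_{W,v}\,\Phi$ and the observation that the sign-splitting of the vector index set $\{(i,j)\}$ coincides with the sign-splitting $I=I_+\cup I_-$ of the fusion index set (because each $W_i$ is definite) are exactly the right mechanisms, and your sandwich argument $M_\pm=\overline{R(T_\pm)}\subseteq R({T_{W,v}}_\pm)\subseteq M_\pm$ for $(i)\Rightarrow(ii)$ is clean. You have also correctly isolated the one genuine issue: for $(ii)\Rightarrow(i)$ the boundedness and surjectivity of $\Phi$ require the local frame bounds of the sequences $\{f_{ij}\}_{j\in J_i}$ to be uniform in $i$. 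In the Hilbert-space prototype this uniformity is an explicit hypothesis (see \cite{pggk}, Theorem~3.2), whereas the present statement buries it in the phrase ``$J$-frame sequence''; your suggestion to record it as the intended content of that phrase is the right resolution. With that uniformity granted, the remainder of your outline goes through without difficulty.
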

Now let $\mathbb{F}=\{(W_i,v_i):i\in{I}\}$ be a $J$-fusion frame for the Krein space $\mathbb{K}$. Then $\{W_i:i\in{I_+}\}$ is a collection of uniformly $J$-positive subspaces of $\mathbb{K}$ and $\{W_i:i\in{I_-}\}$ is a collection of uniformly $J$-negative subspaces of $\mathbb{K}$. Let $T_{W,v}^{\#}$ be the $J$-adjoint operator of the synthesis operator $T_{W,v}$. $T_{W,v}^{\#}$ is called the analysis operator of the $J$-frame of subspaces $F$. Now $T_{W,v}^{\#}=(T_{{W,v}_+}^{\#}+T_{{W,v}_-}^{\#})$. Also we have $N(T_{{W,v}_+}^{\#})^{[\perp]}=R(T_{{W,v}_+})=M_+$. $T_{{W,v}_+}^{\#}(f)=\{v_i\pi_{JW_i}(f)\}_{i\in{I_+}}$ for all $f\in{\mathbb{K}}$. $T_{{W,v}_-}^{\#}(f)=-\{v_i\pi_{JW_i}(f)\}_{i\in{I_-}}$ for all $f\in{\mathbb{K}}$. So $T_{W,v}^{\#}(f)=\{\sigma_iv_i\pi_{JW_i}(f)\}_{i\in{I}}$ for all $f\in{\mathbb{K}}$. Also $T_{{W,v}_+}^{\#}(f)=\{v_i\pi_{W_i}(f)\}_{i\in{I_+}}$ for all $f\in{M_+}$.
Here $\sigma_{i}=1$ if $i\in{I_+}$ and $\sigma_{i}=-1$ if $i\in{I_-}$.

\begin{definition}
The linear operator $S_{W,v}:\mathbb{K}\rightarrow{\mathbb{K}}$ defined by $S_{W,v}(f)=\sum_{i\in I}\sigma_iv_i^2\pi_{J(W_i)}(f)$ is said to be the $J$-fusion frame operator for the $J$-fusion frame $\{(W_i,v_i\}_{i\in I}$.
\end{definition}
\begin{theorem}
If $\{(W_i,v_i)\}_{i\in I}$ is a $J$-fusion frame for the Krein space $\mathbb{K}$ wth synthesis operator $T_{W,v}\in{L\Big(\big(\sum_{i\in{I}}\oplus{W_i}\big)_{\ell_2},\mathbb{K}\Big)}$ then the $J$-frame operator $S_{W,v}$ is bijective and $J$-selfadjoint.
\end{theorem}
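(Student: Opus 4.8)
The plan is to establish $J$-selfadjointness first and then bijectivity, using the structure of $S_{W,v}$ inherited from the synthesis operator $T_{W,v}$ and the maximal definiteness hypotheses built into the definition of a $J$-fusion frame. To begin, I would observe that $S_{W,v} = T_{W,v} T_{W,v}^{\#}$. Indeed, from the formulas recalled just before the definition of $S_{W,v}$, one has $T_{W,v}^{\#}(f) = \{\sigma_i v_i \pi_{JW_i}(f)\}_{i\in I}$, and applying $T_{W,v}$ to this sequence reproduces $\sum_{i\in I}\sigma_i v_i^2 \pi_{JW_i}(f)$; hence $S_{W,v} = T_{W,v} T_{W,v}^{\#}$. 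From this, $J$-selfadjointness is immediate: $S_{W,v}^{\#} = (T_{W,v} T_{W,v}^{\#})^{\#} = (T_{W,v}^{\#})^{\#} T_{W,v}^{\#} = T_{W,v} T_{W,v}^{\#} = S_{W,v}$, using that $\#$ is an involutive anti-multiplicative operation on bounded operators between Krein spaces.

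For bijectivity, I would split $S_{W,v}$ according to the decomposition coming from $I_+$ and $I_-$. Writing $S_{\pm} := T_{W,v_\pm} T_{W,v_\pm}^{\#}$, one checks that $S_{W,v} = S_+ + S_-$, where $S_+(f) = \sum_{i\in I_+} v_i^2 \pi_{JW_i}(f)$ is, on $M_+$, exactly the (Hilbert-space) fusion frame operator of $\mathbb{F}_+$ for $(M_+, [\cdot,\cdot])$, and similarly $S_-$ restricted to $M_-$ is $-1$ times the fusion frame operator of $\mathbb{F}_-$ for $(M_-, -[\cdot,\cdot])$. By the first theorem in the excerpt, $\mathbb{F}_\pm$ is a genuine fusion frame for the Hilbert space $(M_\pm, \pm[\cdot,\cdot])$, so each of these restricted operators is a bounded, boundedly invertible positive operator on the corresponding Hilbert space. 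Because $\mathbb{F}$ is a $J$-fusion frame, $M_+$ is maximal uniformly $J$-positive and $M_-$ is maximal uniformly $J$-negative, so $\mathbb{K} = M_+ \dotplus M_-$ is a direct (though in general non-orthogonal) decomposition with both summands uniformly definite; moreover $JW_i \subseteq M_\pm$ for $i \in I_\pm$, so $R(S_+) \subseteq M_+$ and $R(S_-) \subseteq M_-$, and $S_+$ vanishes on a complement adapted to this splitting, likewise $S_-$.

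The main obstacle is the last step: assembling injectivity and surjectivity of $S_{W,v} = S_+ + S_-$ on all of $\mathbb{K}$ from the invertibility of the pieces on $M_+$ and $M_-$ respectively, given that the decomposition $\mathbb{K} = M_+ \dotplus M_-$ is only a direct sum and not $[\cdot,\cdot]$-orthogonal, so the two summands interact. I would handle this by passing through the associated fundamental symmetry $J$ and a Neumann-series / angular-operator argument: the uniform definiteness of $M_\pm$ guarantees that the angle between $M_+$ and $M_-$ is positive, so $S_{W,v}$ is a ``block-triangular-plus-small-perturbation'' operator with respect to the $J$-inner product; combined with the coercive estimates $A_\pm [f,f] \le \sum_{i\in I_\pm} v_i^2 |[\pi_{W_i}|_{M_\pm}(f),f]| \le B_\pm[f,f]$ from \eqref{JFFEQ}, this yields two-sided bounds $\alpha \|f\|_J \le \|S_{W,v} f\|_J \le \beta \|f\|_J$ for all $f \in \mathbb{K}$, which give injectivity and closed range; surjectivity then follows since $S_{W,v}$ is $J$-selfadjoint (so $R(S_{W,v})^{[\perp]} = N(S_{W,v}^{\#}) = N(S_{W,v}) = \{0\}$, and the range is closed). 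Alternatively, and perhaps more cleanly, one can invoke invertibility of $T_{W,v}$ composed with $T_{W,v}^{\#}$ directly: the $J$-fusion frame hypothesis is precisely equivalent to $T_{W,v}$ being surjective with the ranges of $T_{W,v_\pm}$ spanning complementary maximal definite subspaces, which forces $T_{W,v} T_{W,v}^{\#}$ to be invertible, and I would cite this structural fact as the crux of the argument.
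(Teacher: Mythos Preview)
The paper does not prove this theorem. It is stated in Section~2 (Preliminary Notes), immediately after Definition~2.4, with no argument given; like the neighbouring theorems in that section it is quoted as background, presumably from \cite{sk} and \cite{gmmm}. There is therefore no proof in the present paper to compare your proposal against, and I can only assess the proposal on its own merits.

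Your treatment of $J$-selfadjointness is correct and is the standard one: the identification $S_{W,v}=T_{W,v}T_{W,v}^{\#}$ followed by $(\,\cdot\,)^{\#}$ being an involutive anti-automorphism settles it immediately.

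The bijectivity argument, however, has genuine gaps. First, the assertion ``$JW_i\subseteq M_\pm$ for $i\in I_\pm$'' is not correct in general; what one actually has is $W_i\subseteq M_\pm$, whence $R(T_{W,v_\pm})=M_\pm$ gives $R(S_\pm)\subseteq M_\pm$, while the paper's formula $N(T_{W,v_\pm}^{\#})^{[\perp]}=M_\pm$ gives $N(S_\pm)\supseteq M_\pm^{[\perp]}$. Second, and more seriously, you do not confront the fact that the range decomposition $\mathbb{K}=M_+\dotplus M_-$ and the kernel decomposition $\mathbb{K}=M_-^{[\perp]}\dotplus M_+^{[\perp]}$ are \emph{different} direct sums; the block structure of $S_{W,v}=S_++S_-$ is relative to two distinct splittings, and the Neumann-series\,/\,angular-operator sketch you offer does not actually bridge them. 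A clean route is: for injectivity, write $f=f_1+f_2\in M_-^{[\perp]}\dotplus M_+^{[\perp]}$, observe $S_{W,v}f=S_+f_1+S_-f_2\in M_+\dotplus M_-$, and use $M_+^{[\perp]}\cap M_-^{[\perp]}=\{0\}$; for surjectivity, use that $S_\pm|_{M_\pm}$ is the invertible Hilbert-space fusion frame operator on $(M_\pm,\pm[\cdot,\cdot])$ together with the fact that $M_\mp^{[\perp]}$ is a complement of $M_\pm^{[\perp]}=N(S_\pm)$. Finally, your ``alternative'' at the end is circular: saying that the $J$-fusion-frame hypothesis ``forces $T_{W,v}T_{W,v}^{\#}$ to be invertible'' is exactly the statement to be proved, not a fact you may cite.
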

\begin{theorem}
If $\{(W_i,v_i)\}_{i\in I}$ is a $J$-fusion frame for the Krein space $\mathbb{K}$ with $J$-fusion frame operator $S_{W,v}$, then $\{(S_{W,v}^{-1}(W_i),v_i)\}_{i\in I}$ is a $J$-fusion frame for $\mathbb{K}$ with $J$-fusion frame operator $S_{W,v}^{-1}$.
\end{theorem}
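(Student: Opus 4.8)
The plan is to mimic the classical Hilbert-space argument that the canonical dual of a fusion frame with frame operator $S$ is $\{(S^{-1}W_i, v_i)\}$, but keeping careful track of the $J$-orthogonal projections and the signature $\sigma_i$. The starting point is the definition $S_{W,v}(f) = \sum_{i\in I}\sigma_i v_i^2 \pi_{J(W_i)}(f)$ and Theorem 2.5, which tells us $S_{W,v}$ is bijective and $J$-selfadjoint; hence $S_{W,v}^{-1}$ is also bijective and $J$-selfadjoint, and the subspaces $S_{W,v}^{-1}(W_i)$ are well-defined closed subspaces of $\mathbb{K}$. The first genuine task is to verify that $\{S_{W,v}^{-1}(W_i)\}_{i\in I}$ is again a collection of non-indefinite subspaces with the same index partition $I_\pm$: since $S_{W,v}^{-1}$ is $J$-selfadjoint and bijective, for $g = S_{W,v}^{-1}(f)$ with $f \in W_i$ one has $[g, g'] = [S_{W,v}^{-1} f, S_{W,v}^{-1} f']$, and one argues that the definiteness type is preserved because $S_{W,v}^{-1}$ maps $M_\pm$-type directions to $M_\pm$-type directions — this uses that $S_{W,v}$ leaves the maximal $J$-positive/$J$-negative structure invariant, which follows from the construction in the definition of $J$-fusion frame.

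**Next I would** establish the key operator identity relating the projections. The crucial algebraic fact in the Hilbert-space proof is that the orthogonal projection onto $S^{-1}W_i$ equals $S^{-1}\pi_{W_i} S (S^{-1}\pi_{W_i}S)$-type expressions; in the Krein setting the analogue should be $\pi_{J(S_{W,v}^{-1}W_i)} = S_{W,v}^{-1}\,\pi_{J(W_i)}\,S_{W,v}$ up to the $J$-adjoint bookkeeping, using $\pi_{JM} = \pi_M^{\#} = J\pi_M J$ recorded in the preliminaries and the $J$-selfadjointness of $S_{W,v}$. Granting this, the new frame operator is
\[
\widetilde{S}(f) = \sum_{i\in I}\sigma_i v_i^2\, \pi_{J(S_{W,v}^{-1}W_i)}(f) = \sum_{i\in I}\sigma_i v_i^2\, S_{W,v}^{-1}\pi_{J(W_i)}S_{W,v}(f) = S_{W,v}^{-1}\Big(\sum_{i\in I}\sigma_i v_i^2 \pi_{J(W_i)}\Big)S_{W,v}(f),
\]
and the bracketed sum is exactly $S_{W,v}$, so $\widetilde{S} = S_{W,v}^{-1} S_{W,v} S_{W,v}^{-1} = S_{W,v}^{-1}$. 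One must also check that the relevant series converges unconditionally and that $\widetilde{S}$ is the frame operator of an actual $J$-fusion frame — i.e., that the Bessel condition and the maximality of $R(\widetilde{T}_\pm)$ hold — but the bijectivity of $S_{W,v}^{-1}$ transports these properties from $\{(W_i,v_i)\}$ automatically, since applying a bounded bijection with bounded inverse to a $J$-fusion frame preserves the defining range conditions.

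**The main obstacle** I anticipate is the projection identity $\pi_{J(S_{W,v}^{-1}W_i)} = S_{W,v}^{-1}\pi_{J(W_i)}S_{W,v}$: in a Krein space the operator $\pi_{J(W_i)}$ is an \emph{ordinary} (Hilbert-space) orthogonal projection onto $JW_i$, not a $J$-orthogonal projection, and $S_{W,v}^{-1}$ is $J$-selfadjoint but not selfadjoint in the Hilbert inner product, so $S_{W,v}^{-1}\pi_{J(W_i)}S_{W,v}$ is not literally an orthogonal projection. The honest statement is likely that this operator is an oblique projection onto $J S_{W,v}^{-1} W_i$ whose \emph{range} is what matters for the frame operator formula; I would therefore route the argument through $R(T_{W,v}^{\#})$ and the explicit formula $T_{W,v}^{\#}(f) = \{\sigma_i v_i \pi_{JW_i}(f)\}$, showing that the analysis operator of the new system is $T_{W,v}^{\#} S_{W,v}^{-1}$ (again using $J$-selfadjointness), whence its frame operator is $T_{W,v}\,(T_{W,v}^{\#}S_{W,v}^{-1})$ composed appropriately to give $S_{W,v}^{-1} S_{W,v} S_{W,v}^{-1} = S_{W,v}^{-1}$. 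The second, more routine, obstacle is confirming that $\{S_{W,v}^{-1}W_i\}$ has the same $I_\pm$ decomposition; this I would dispatch by noting $S_{W,v}$ and $S_{W,v}^{-1}$ are uniformly $J$-positive on $M_+$ and uniformly $J$-negative on $M_-$ (being $J$-selfadjoint with the block structure inherited from $T_{W,v}P_\pm$), so they preserve the sign of $[f,f]$ on each $W_i$.
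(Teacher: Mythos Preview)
The paper does not supply a proof of this theorem. It appears in Section~2 (``Preliminary Notes'') as a result recalled from the literature, alongside Theorems~2.2, 2.3 and 2.5, all of which are stated without proof; the section's opening sentence explicitly refers the reader to \cite{jb,isty,gmmm,pggk,msak,mads,pga,sksk} (and implicitly to \cite{sk}, where the $J$-fusion frame was introduced) for details. Consequently there is no in-paper argument to compare your proposal against.

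That said, a brief remark on the substance of your outline. Your strategic plan---transport the Hilbert-space canonical-dual argument, using bijectivity and $J$-selfadjointness of $S_{W,v}$---is the natural one, and you have correctly isolated the genuine difficulty: the identity $\pi_{J(S_{W,v}^{-1}W_i)} = S_{W,v}^{-1}\pi_{J(W_i)}S_{W,v}$ fails in general because $S_{W,v}^{-1}$ is $J$-selfadjoint rather than Hilbert-selfadjoint, so the right-hand side is not an orthogonal projection. Your proposed workaround via the analysis operator is the right instinct, but note that even in the Hilbert fusion-frame setting the frame operator of $\{(S^{-1}W_i,v_i)\}$ is \emph{not} simply $S^{-1}$ without a correction (cf.\ G\u{a}vru\c{t}a's formula $\pi_{V}T^{-1}=\pi_{V}T^{-1}\pi_{TV}$); the Krein version requires the analogous bookkeeping with $\pi_{JW_i}$, and a complete argument must either invoke such a lemma or establish it directly. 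Your sketch acknowledges this gap but does not close it, so as written the proposal is an outline rather than a proof.
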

\section{Main Results}
The idea of Tight/Parseval frames has many important applications in Hilbert space frame theory as well as in Krein space frame theory. In finite dimensional cases the concept has some beautiful geometry. So in this section we are motivated to define Tight $J$-fusion frames in Krein space and further we will study some important properties derived from the definition.

From the definition \ref{JFFEQ} we know that every $J$-fusion frame in $\mathbb{K}$ decomposes the Krein space into two parts. The positive part $M_+$ is the maximal uniformly $J$-positive subspace and the negative part $M_-$ is the maximal uniformly $J$-negative subspaces. Now the cone of neutral vectors in $\mathbb{K}$ is denoted by $\verb"C"$ and defined by $\verb"C"=\{n\in{\mathbb{K}}:[n,n]=0\}$. In \cite{gmmm} we have a concept of angle between any uniformly $J$-definite subspace of $\mathbb{K}$ and $\verb"C"$. Using the results of \cite{gmmm}, we have the following two equations. 
$c_0(M_+,\verb"C")=\frac{1}{\sqrt2}(\sqrt{\frac{1+\alpha^+}{2}}+\sqrt{\frac{1-\alpha^+}{2}})$ and $c_0(M_-,\verb"C")=\frac{1}{\sqrt2}(\sqrt{\frac{1+\beta^+}{2}}+\sqrt{\frac{1-\beta^+}{2}})$, where $\alpha^+=\gamma({G_{M_+}})$ and $\beta^+=\gamma({G_{M_-}})$.

Now every $J$-fusion frame is associated with a positive real numbers $\zeta$, where $\zeta=c_0(M_+,\verb"C")+c_0(M_-,\verb"C")$. We also have $\zeta~\in[\sqrt{2},2)$. We will use the real number $\zeta$ (associated with a $J$-frame for $\mathbb{K}$) extensively in our work and instead of the term $J$-fusion frame for $\mathbb{K}$ we will use the term $\zeta-J$-fusion frame for $\mathbb{K}$.

\begin{definition}
Let $(\mathbb{K},[\cdot,\cdot],J)$ be a Krein space and $\mathbb{F}=\{(W_i,v_i):i\in{I}\}$ be a $J$-fusion frame for the Krein space $\mathbb{K}$. Then $\mathbb{F}$ is said to be a $\zeta-J$-tight fusion frame iff
\begin{equation}
A_{\pm}[f,f]=\sum_{i\in{I_{\pm}}}v_i^2|[{\pi_{W_i}|}_{M_\pm}(f),f]|,~~~~\textmd{ for all}{~f\in{M_{\pm}}}
\end{equation}
Moreover, $\mathbb{F}$ is said to be a $\zeta-J$-Parseval fusion frame if it is a $\zeta-J$-tight frame for the Krein space $\mathbb{K}$ and in addition $A_{\pm}={\pm}1$.
\end{definition}
\begin{example}
Consider the Vector space $\ell_2(\mathbb{R})$. Let us define a inner product on $\ell_2(\mathbb{R})$ by $[x,y]=x_1 y_1 + x_2 y_2-x_3 y_3+x_4 y_4+\ldots$ when $x=(x_1,x_2,x_3,x_4,\ldots), y=(y_1,y_2,y_3,y_4,\ldots)$ and $ x_i, y_i\in\mathbb{R}$ for $i \in{\mathbb{N}}$. Consider the subspaces $W_1=span\\
\{(-\frac{\sqrt3}{2},-\frac{1}{2},0,0,\ldots)\},~W_2=span\{(\frac{\sqrt3}{2},-\frac{1}{2},0,0,\ldots)\},~W_3=span\{(0,1,0,0,\ldots)\}\\
,~W_4=span\{(\frac{1}{\sqrt2},0,\frac{\sqrt3}{\sqrt2},\ldots)\}$. Let $v_1=v_2=v_3=\frac{\sqrt2}{\sqrt3}$ and $v_4=1$. Then $\big\{(W_i,v_i):i\in\{1,2,3,4\}\big\}$ is a $J$-fusion frame for the above Krein space $\ell_2(\mathbb{R})$. It is also a $J$-Parseval fusion frame with $\zeta=\frac{3}{2\sqrt2}+\frac{\sqrt3}{2\sqrt2}$. By numerical calculation we have $\alpha^+=1$ and $\beta^+=\frac{1}{2}$.\\
\end{example}

\subsection{Some results on $J$-tight fusion frames}
In this section we will define $J$-orthonormal basis of subspaces for a Krein space $\mathbb{K}$. We will also find a relation between $J$-orthonormal basis of subspaces and $1$-uniform $J$-parseval fusion frames in $\mathbb{K}$. 
\begin{definition}
Let $\mathbb{F}=\{W_i:i\in{I}\}$ be a family of closed non-indefinite subspaces of a Krein space $\mathbb{K}$. Then the family is said to be an $J$-orthonormal basis of subspaces if $\mathbb{K}^+=\dot{\oplus}_{i\in{I_+}}W_i$ and $\mathbb{K}^-=\dot{\oplus}_{i\in{I_-}}W_i$, where $\mathbb{K}=\mathbb{K}^+[\dot{\oplus}]\mathbb{K}^-$ is the cannonical decomposition of the Krein space.
\end{definition}
 Consider $\sqrt{2}-1-\textmd{uniform~}J$-Parseval fusion frames for a Krein space $\mathbb{K}$. Our next theorem will establish a relation between $\sqrt{2}-1-\textmd{uniform~}J$-Parseval fusion frames and $J$-orthonormal basis of subspaces for a given Krein space.
\begin{theorem}
Let $\mathbb{K}$ be a Krein space. Then any $\zeta-J$-fusion frame is a $J$-orthonormal basis of subspaces for $\mathbb{K}$ iff it is a $\sqrt{2}-1-\textmd{uniform~}J$-Parseval fusion frame for $\mathbb{K}$.
\end{theorem}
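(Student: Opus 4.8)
The plan is to transfer everything to the canonical decomposition $\mathbb{K}=\mathbb{K}^{+}[\dot{\oplus}]\mathbb{K}^{-}$. The core fact to isolate is that, for a $J$-fusion frame, the associated parameter attains its minimal value $\zeta=\sqrt{2}$ \emph{if and only if} $M_{+}=\mathbb{K}^{+}$ and $M_{-}=\mathbb{K}^{-}$. Granting this, the theorem collapses to the classical Hilbert-space statement that a $1$-uniform Parseval fusion frame is exactly an orthonormal basis of subspaces, applied in each of the component Hilbert spaces $(M_{+},[\cdot,\cdot])$ and $(M_{-},-[\cdot,\cdot])$.

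\emph{For the implication ``$J$-orthonormal basis of subspaces $\Rightarrow$ $\sqrt{2}$-$1$-uniform $J$-Parseval''} I would argue as follows. If $\mathbb{F}=\{(W_{i},v_{i})\}$ is a $J$-orthonormal basis of subspaces (so $v_{i}=1$ for all $i$ and the $W_{i}$ form a $J$-orthonormal basis of subspaces), then $M_{\pm}=\overline{\sum_{i\in I_{\pm}}W_{i}}=\mathbb{K}^{\pm}$; since $\pm[\cdot,\cdot]$ is the underlying Hilbert inner product on $\mathbb{K}^{\pm}$, the Gramian $G_{M_{\pm}}$ equals $\pm\,\mathrm{Id}$, so $\alpha^{+}=\beta^{+}=1$, hence $c_{0}(M_{\pm},\mathtt{C})=\tfrac{1}{\sqrt{2}}$ and $\zeta=\sqrt{2}$. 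For the frame identity, note that for $i\in I_{+}$ the restriction $\pi_{W_{i}}|_{\mathbb{K}^{+}}$ is the $\mathbb{K}^{+}$-orthogonal projection onto $W_{i}$, so $[\pi_{W_{i}}f,f]=\|\pi_{W_{i}}f\|^{2}\ge 0$, and because an orthonormal basis of subspaces resolves the identity, $\sum_{i\in I_{+}}\|\pi_{W_{i}}f\|^{2}=\|f\|^{2}=[f,f]$ for all $f\in\mathbb{K}^{+}$; thus $A_{+}=1$, and the mirror computation in $(\mathbb{K}^{-},-[\cdot,\cdot])$ gives $A_{-}=-1$. This direction is essentially a verification.

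\emph{For the converse} assume $\mathbb{F}=\{(W_{i},v_{i})\}$ is a $\sqrt{2}$-$1$-uniform $J$-Parseval fusion frame, so $v_{i}=1$, $A_{\pm}=\pm1$ and $\zeta=\sqrt{2}$. The crucial step is to recover $M_{\pm}=\mathbb{K}^{\pm}$ from $\zeta=\sqrt{2}$. Put $h(t)=\sqrt{\tfrac{1+t}{2}}+\sqrt{\tfrac{1-t}{2}}$; since $h(t)^{2}=1+\sqrt{1-t^{2}}$, $h$ is strictly decreasing on $[0,1]$ from $\sqrt{2}$ to $1$, so $c_{0}(M_{+},\mathtt{C})=\tfrac{1}{\sqrt{2}}h(\alpha^{+})$ and $c_{0}(M_{-},\mathtt{C})=\tfrac{1}{\sqrt{2}}h(\beta^{+})$ each lie in $[\tfrac{1}{\sqrt{2}},1]$; as they sum to $\zeta=\sqrt{2}$, each must equal $\tfrac{1}{\sqrt{2}}$, forcing $\alpha^{+}=\gamma(G_{M_{+}})=1$ and $\beta^{+}=\gamma(G_{M_{-}})=1$. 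Now $G_{M_{+}}$, determined by $[x,y]=\langle G_{M_{+}}x,y\rangle$ on $M_{+}$, is positive and boundedly invertible (as $M_{+}$ is uniformly $J$-positive) with $0<G_{M_{+}}\le\mathrm{Id}$ (because $0\le[x,x]\le\|x\|^{2}$), so $\gamma(G_{M_{+}})=\min\sigma(G_{M_{+}})$; hence $\gamma(G_{M_{+}})=1$ gives $G_{M_{+}}=\mathrm{Id}$, i.e.\ $[x,y]=\langle x,y\rangle$ for all $x,y\in M_{+}$, which forces $M_{+}\subseteq\mathbb{K}^{+}$ and then $M_{+}=\mathbb{K}^{+}$ by maximality of $M_{+}$ among uniformly $J$-positive subspaces; the same argument (interchanging the signs) yields $M_{-}=\mathbb{K}^{-}$. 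I expect this passage --- converting the metric equality $\zeta=\sqrt{2}$ into the operator identity $G_{M_{\pm}}=\pm\,\mathrm{Id}$ through the angle and reduced-minimum-modulus relations of \cite{gmmm} --- to be the main obstacle; the rest is Hilbert-space bookkeeping.

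To finish the converse, with $M_{+}=\mathbb{K}^{+}$ in hand the $J$-Parseval identity reads $\|f\|^{2}=[f,f]=\sum_{i\in I_{+}}\|\pi_{W_{i}}f\|^{2}=\langle(\sum_{i\in I_{+}}\pi_{W_{i}})f,f\rangle$ for every $f\in\mathbb{K}^{+}$, so, the summed operator being self-adjoint, $\sum_{i\in I_{+}}\pi_{W_{i}}=\mathrm{Id}_{\mathbb{K}^{+}}$. Evaluating this resolution of the identity at $g=\pi_{W_{j}}f$ gives $\|\pi_{W_{j}}f\|^{2}=\sum_{i\in I_{+}}\|\pi_{W_{i}}\pi_{W_{j}}f\|^{2}\ge\|\pi_{W_{j}}f\|^{2}+\sum_{i\ne j}\|\pi_{W_{i}}\pi_{W_{j}}f\|^{2}$, forcing $\pi_{W_{i}}\pi_{W_{j}}=0$ for $i\ne j$; hence the subspaces $\{W_{i}\}_{i\in I_{+}}$ are mutually orthogonal and their closed span is $\mathbb{K}^{+}$, i.e.\ $\mathbb{K}^{+}=\dot{\oplus}_{i\in I_{+}}W_{i}$. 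The identical argument in $(\mathbb{K}^{-},-[\cdot,\cdot])$ gives $\mathbb{K}^{-}=\dot{\oplus}_{i\in I_{-}}W_{i}$, so $\mathbb{F}$ is a $J$-orthonormal basis of subspaces, completing the equivalence.
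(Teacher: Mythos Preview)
Your proof is correct and follows essentially the same strategy as the paper: both directions rest on (i) the equivalence between $\zeta=\sqrt{2}$ and $M_{\pm}=\mathbb{K}^{\pm}$ via the Gramian/angle computation, and (ii) the Hilbert-space fact that a $1$-uniform Parseval fusion frame is an orthonormal basis of subspaces. The only differences are cosmetic: you spell out the monotonicity argument for $h$ and the passage from $\gamma(G_{M_{+}})=1$ to $M_{+}=\mathbb{K}^{+}$ explicitly (the paper leaves this as ``simple numerical calculation''), and you prove the Hilbert-space lemma directly via $\sum_{i}\pi_{W_{i}}=\mathrm{Id}$ rather than citing \cite{pggk}.
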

\begin{proof}
Let $\{W_i:i\in{I}\}$ be a $J$-orthonormal basis of subspaces in $\mathbb{K}$. So let $I_+=\{i\in{I}:[f_i,f_i]>0~\textmd{for all~} f_i\in{W_i}\}$ and $I_-=\{i\in{I}:[f_i,f_i]<0~\textmd{for all~} f_i\in{W_i}\}$. Then $\mathbb{K}^+=\dot{\oplus}_{i\in{I_+}}W_i$ and $\mathbb{K}^-=\dot{\oplus}_{i\in{I_-}}W_i$, where $\mathbb{K}=\mathbb{K}^+[\dot{\oplus}]\mathbb{K}^-$. Now $(\mathbb{K}^+,[\cdot,\cdot])$ is a Hilbert space and $\gamma{(G_{\mathbb{K}^+})}=1$. Hence $c_0(\mathbb{K}^+,\verb"C")=\frac{1}{\sqrt{2}}$. By similar arguments we have $c_0(\mathbb{K}^-,\verb"C")=\frac{1}{\sqrt{2}}$. Hence $\zeta=\sqrt2$. Also it is easy to see that $\{W_i:i\in{I_+}\}$ is an orthonormal basis of subspaces in $\mathbb{K}^+$. Hence it is a $1$-uniform Parseval frame of subspaces in $\mathbb{K}^+$. So, $\{W_i:i\in{I_+}\}\cup\{W_i:i\in{I_-}\}=\{W_i:i\in{I}\}$ is a $\sqrt{2}-1-\textmd{uniform~}J$-Parseval fusion frame for $\mathbb{K}$.

Conversely, let $\{W_i:i\in{I}\}$ be a $\sqrt{2}-1-\textmd{uniform~}J$-Parseval fusion frame for $\mathbb{K}$. So let $I_+=\{i\in{I}:[f_i,f_i]>0~\textmd{for all~} f_i\in{W_i}\}$ and $I_-=\{i\in{I}:[f_i,f_i]<0~\textmd{for all~} f_i\in{W_i}\}$. Consider $M_{\pm}=\overline{\sum_{i\in{I_{\pm}}}W_i}$. Now it is clear that $\{W_i:i\in{I_+}\}$ is a $1$-uniform Parseval fusion frame for $(M_+,[\cdot,\cdot])$ and $\{W_i:i\in{I_-}\}$ is a $1$-uniform Parseval fusion frame for $(M_-,-[\cdot,\cdot])$. Hence from \cite{pggk} we know that $\{W_i:i\in{I_+}\}$ is an orthonormal basis of subspaces for $(M_+,[~.~])$ and $\{W_i:i\in{I_-}\}$ is an orthonormal basis of subspaces for $(M_-,-[\cdot,\cdot])$. Again we have $c_0(M_+,\verb"C")+c_0(M_-,\verb"C")=\sqrt{2}$, which implies that $c_0(M_+,\verb"C")=\frac{1}{\sqrt{2}}$ and $c_0(M_-,\verb"C")=\frac{1}{\sqrt{2}}$. Also by some simple numerical calculation we have $\gamma{(G_{M_+})}=1$ and also $\gamma{(G_{M_-})}=1$. Using all the above results we conclude that $\mathbb{K}=M_+{[\dot{\oplus}]}M_-$.
Hence the proof.
\end{proof}
Let $\mathbb{F}=\{W_i:i\in{I}\}$ be a sequence of non-neutral definite subspaces in a Krein space $(\mathbb{K},[\cdot,\cdot],J)$. Consider $M_+=\overline{\sum_{i\in{I_{+}}}W_i}$ and $M_-=\overline{\sum_{i\in{I_{-}}}W_i}$, where $I_+=\{i\in{I}:[f_i,f_i]>0~\textmd{for all~} f_i\in{W_i}\}$ and $I_-=\{i\in{I}:[f_i,f_i]<0~\textmd{for all~} f_i\in{W_i}\}$. Now if $M_+$ is a maximal uniformly $J$-positive subspace of $\mathbb{K}$  and $M_-$ is a maximal uniformly $J$-negative subspace of $\mathbb{K}$. Then $\{W_i:i\in{I_+}\}$ and $\{W_i:i\in{I_-}\}$ will be fusion frames for $(M_+,[\cdot,\cdot])$ and $(M_-,-[\cdot,\cdot])$ respectively. Let ${T_{W,v}}_1$ be the synthesis operator for the fusion frame $\{W_i:i\in{I_+}\}$ and ${T_{W,v}}_2$ be the synthesis operator for the frame $\{W_i:i\in{I_-}\}$. Let ${T_{W,v}}_1^\ast$ and ${T_{W,v}}_2^\ast$ are adjoint operators of ${T_{W,v}}_1$ and ${T_{W,v}}_2$ respectively.
\begin{definition}
Let $(\mathbb{K},[\cdot,\cdot],J)$ be a Krein space. A collection of subspaces $\mathbb{F}=\{W_i:i\in{I}\}$ is said to be a disjoint sequence of subspaces in $\mathbb{K}$ if \\
 $\overline{\sum_{i\in{I_{+}}}W_i}~{\cap}~\overline{\sum_{i\in{I_{-}}}W_i}=\{0\}$.
\end{definition}
\begin{example}
Every $\zeta-J$-fusion frame in a Krein space $\mathbb{K}$ is a disjoint sequence of $\mathbb{K}$.
\end{example}
\begin{definition}
Let $\mathbb{F}=\{W_i:i\in{I}\}$ be a collection of subspaces in the Krein space $\mathbb{K}$. Then $\mathbb{F}$ is said to be strictly disjoint sequence iff $\overline{\sum_{i\in{I_{+}}}W_i}~{[\perp]}~\overline{\sum_{i\in{I_{-}}}W_i}$.
\end{definition}
We will now derive an useful result regarding $\zeta-J$-Parseval fusion frames for a Krein space $\mathbb{K}$. Our theorem guarantees
that a Krein space is richly supplied with $\zeta-J$-Parseval fusion frames.
\begin{theorem}
Let $(\mathbb{K},[\cdot,\cdot],J)$ be a Krein space. Assume that $M_1$ and $M_2$ are definite subspaces of $\mathbb{K}$ respectively. Let $\{(X_i,u_i)\}_{i\in{I_1}}$ and $\{(Y_i,v_i)\}_{i\in{I_2}}$ are Parseval fusion frames for $M_1$ and $M_2$ respectively.Then $\{X_i\}_{i\in{I_1}}\cup\{Y_i\}_{i\in{I_2}}$ is a strictly disjoint sequence in $\mathbb{K}$ only if $\{(X_i,u_i)\}_{i\in{I_1}}\cup\{(Y_i,v_i)\}_{i\in{I_2}}$ is a $\zeta-J$-Parseval fusion frame for $\mathbb{K}$ for $\zeta{\in}[\sqrt{2},2)$.
\end{theorem}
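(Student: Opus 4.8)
The plan is to check directly that, under the strict-disjointness hypothesis, the amalgamated family $\mathbb{G}:=\{(X_i,u_i)\}_{i\in I_1}\cup\{(Y_i,v_i)\}_{i\in I_2}$ meets the defining requirements of a $J$-fusion frame for $\mathbb{K}$ and that the two one-sided inequalities in \eqref{JFFEQ} collapse to equalities with constants $+1$ and $-1$. First I would pin down the index splitting of $\mathbb{G}$. Since the hypothesis provides a Parseval fusion frame for $M_1$, the pair $(M_1,[\cdot,\cdot])$ is a Hilbert space, so $M_1$ is a closed uniformly $J$-positive subspace of $\mathbb{K}$; hence each $X_i\subseteq M_1$ is $J$-positive and in the notation of the $J$-fusion frame definition the positive part of the index set of $\mathbb{G}$ is $I_+=I_1$. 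Symmetrically $(M_2,-[\cdot,\cdot])$ is a Hilbert space, $M_2$ is closed and uniformly $J$-negative, and $I_-=I_2$. Because $\{(X_i,u_i)\}_{i\in I_1}$ is a fusion frame for $M_1$ its subspaces span $M_1$, so $M_+=\overline{\sum_{i\in I_1}X_i}=M_1$, and likewise $M_-=M_2$. Being Parseval, the synthesis operator of $\{(X_i,u_i)\}_{i\in I_1}$ composed with its analysis operator is the identity of $M_1$, hence surjective onto $M_1$; as this operator is precisely the compression ${T_{W,v}}_+$ of the synthesis operator of $\mathbb{G}$ to the $\ell_2$-summand indexed by $I_1$, we obtain $R({T_{W,v}}_+)=M_1$, and similarly $R({T_{W,v}}_-)=M_2$. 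Finally $\mathbb{G}$ is Bessel since each of its halves is a fusion frame for its Hilbert space and $M_1+M_2$ is a topological direct sum.

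Next I would establish the $J$-fusion frame property. By the definition in Section~2, $\mathbb{G}$ is a $J$-fusion frame for $\mathbb{K}$ as soon as $R({T_{W,v}}_+)=M_1$ is a maximal uniformly $J$-positive subspace and $R({T_{W,v}}_-)=M_2$ is a maximal uniformly $J$-negative subspace. Uniform $J$-definiteness was already noted, so only maximality remains, and this is where strict disjointness enters: strict disjointness of $\{X_i\}_{i\in I_1}\cup\{Y_i\}_{i\in I_2}$ means $M_1\,{[\perp]}\,M_2$, that is, $M_2\subseteq M_1^{[\perp]}$, and combined with the maximality of the definite subspaces $M_1,M_2$ this forces $M_1^{[\perp]}=M_2$ and hence the fundamental decomposition $\mathbb{K}=M_1\,{[\dot{\oplus}]}\,M_2$. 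Then $M_1,M_2$ are indeed maximal and $R(T_{W,v})=M_1+M_2=\mathbb{K}$, so $\mathbb{G}$ is a $J$-fusion frame for $\mathbb{K}$, and it therefore carries its parameter $\zeta=c_0(M_1,\mathtt{C})+c_0(M_2,\mathtt{C})$, which lies in $[\sqrt{2},2)$ by the general estimate recorded just before the definition of $\zeta$-$J$-tight fusion frames.

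It then remains to identify the frame constants. For $f\in M_+=M_1$ the projection ${\pi_{X_i}|}_{M_1}(f)$ lies in $X_i\subseteq M_1$, so orthogonality inside $(M_1,[\cdot,\cdot])$ gives $[{\pi_{X_i}|}_{M_1}(f),f]=[{\pi_{X_i}|}_{M_1}(f),{\pi_{X_i}|}_{M_1}(f)]\geq 0$, whence $|[{\pi_{X_i}|}_{M_1}(f),f]|=[{\pi_{X_i}|}_{M_1}(f),f]$. The Parseval identity for $\{(X_i,u_i)\}_{i\in I_1}$ in the Hilbert space $(M_1,[\cdot,\cdot])$ then reads $[f,f]=\sum_{i\in I_1}u_i^2\,|[{\pi_{X_i}|}_{M_1}(f),f]|$, which is exactly \eqref{JFFEQ} for $\mathbb{G}$ on $M_+$ with $A_+=B_+=1$. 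The mirror computation in $(M_2,-[\cdot,\cdot])$, where now $[{\pi_{Y_i}|}_{M_2}(g),g]\leq 0$ and hence $|[{\pi_{Y_i}|}_{M_2}(g),g]|=-[{\pi_{Y_i}|}_{M_2}(g),g]$ for $g\in M_-=M_2$, yields $A_-=B_-=-1$ from the Parseval identity $-[g,g]=\sum_{i\in I_2}v_i^2(-[{\pi_{Y_i}|}_{M_2}(g),g])$. Hence $\mathbb{G}$ is a $\zeta$-$J$-Parseval fusion frame for $\mathbb{K}$ with $\zeta\in[\sqrt{2},2)$, as claimed.

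The main obstacle is the maximality step. Strict disjointness by itself only delivers $M_2\subseteq M_1^{[\perp]}$; promoting this to the fundamental decomposition $\mathbb{K}=M_1\,{[\dot{\oplus}]}\,M_2$ that the $J$-fusion frame definition demands genuinely requires $M_1$ and $M_2$ to be \emph{maximal} uniformly $J$-definite, for otherwise a small $J$-positive subspace together with a small $J$-orthogonal $J$-negative subspace --- each trivially carrying a Parseval fusion frame, and together strictly disjoint --- would contradict the statement. So the substantive reading is that ``$M_1,M_2$ are definite subspaces'' must be taken as ``maximal definite subspaces''; granting that, the remaining points (surjectivity of the Parseval synthesis maps, the Bessel property of the union, and the transfer of the two Hilbert-space Parseval identities into the single identity \eqref{JFFEQ}) are routine.
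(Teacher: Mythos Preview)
Your argument follows the paper's closely: both reduce the $J$-fusion-frame condition to showing that $M_1,M_2$ are maximal uniformly $J$-definite subspaces of opposite sign with $\mathbb{K}=M_1\,[\dot{\oplus}]\,M_2$, and both invoke strict disjointness precisely to obtain $M_1\,[\perp]\,M_2$. One tactical difference is that the paper does not assume at the outset that $M_1$ and $M_2$ carry opposite signs; having fixed $M_1$ as positive it \emph{deduces} that $M_2\subseteq M_1^{[\perp]}$ must be negative---a deduction which, as you in effect note, is only valid once $M_1$ is maximal positive. Your explicit flagging of the maximality issue is on point and identifies a genuine gap in the statement: the paper passes from $M_1\,[\perp]\,M_2$ to the fundamental decomposition $\mathbb{K}=M_1\,[\dot{+}]\,M_2$ by citing \cite{isty}, but that conclusion requires maximality, and without it the theorem fails for exactly the reason you sketch (a small positive subspace and a $J$-orthogonal small negative subspace). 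Your remaining additions---the surjectivity of the one-sided synthesis operators onto $M_\pm$, the Bessel property of the union, and the explicit verification that $A_\pm=\pm1$---are expansions of steps the paper asserts without detail.
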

\begin{proof}
Without any loss of generality we assume that $M_1$ is positive $J$-definite. Hence it is intrinsically complete. Hence $(M_1,[\cdot,\cdot])$ is a Hilbert space. Given that $\{(X_i,u_i):i\in{I_1}\}$ is a Parseval fusion frame for $(M_1,[\cdot,\cdot])$. Therefore, $\overline{\sum_{i\in{I_{+}}}X_i}=M_1$ and also $X_i\in\mathbb{P}^+$.

Now since $\{X_i\}\cup\{Y_i\}$ is a strictly disjoint sequence in $\mathbb{K}$, so $\overline{\sum_{i\in{I_{1}}}X_i}~{[\perp]}~$\\
$\overline{\sum_{i\in{I_{2}}}Y_i}=\{0\}$. Hence $\overline{\sum_{i\in{I_{2}}}Y_i}$ is a closed negative subspace of $\mathbb{K}$. Now $\{(Y_i,v_i):i\in{I_2}\}$ is a Parseval fusion frame for $M_2$. Therefore, $\overline{\sum_{i\in{I_{2}}}Y_i}=M_2$. Hence $M_2$ is a negative $J$-definite subspace of $\mathbb{K}$. Since $M_1~[\perp]~M_2$, so we have a fundamental decomposition of $\mathbb{K}$ \textit{i.e.} $\mathbb{K}=M_1~[\dot{+}]~M_2$ (see \cite{isty}). Since both $M_1$ and $M_2$ is closed $J$-definite and also intrinsically complete, hence both $M_1$ and $M_2$ are uniformly $J$-definite (see \cite{jb}). Let $\zeta=c_0(M_+,\verb"C")+c_0(M_-,\verb"C")$, then $\{(X_i,u_i)\}_{i\in{I_1}}\cup\{(Y_i,v_i)\}_{i\in{I_2}}$ is a $\zeta-J$-fusion frame for $\mathbb{K}$. As both $\{(X_i,u_i)\}_{i\in{I_1}}$ and $\{(Y_i,v_i)\}_{i\in{I_2}}$ are Parseval fusion frames for $M_1$ and $M_2$ respectively, so $\{(X_i,u_i)\}_{i\in{I_1}}\cup\{(Y_i,v_i)\}_{i\in{I_2}}$ is also a $\zeta-J$-Parseval fusion frame for $\mathbb{K}$.
\end{proof}
\begin{remark} The statement of the above theorem is sufficient but not necessary. Since let $\{(X_i,u_i)\}_{i\in{I_1}}\cup\{(Y_i,v_i)\}_{i\in{I_2}}$ is a $\zeta-J$-Parseval fusion frame for $\mathbb{K}$ for $\zeta{\in}[\sqrt{2},2)$, then there exists uniformly $J$-positive definite subspace $M_+$ and uniformly $J$-negative subspace $M_-$ such that $\{(X_i,u_i)\}_{i\in{I_1}}$ is a Parseval fusion frame for $(M_+,[\cdot,\cdot])$ and $\{(Y_i,v_i)\}_{i\in{I_2}}$ is a Parseval fusion frame for the Hilbert space $(M_-,-[\cdot,\cdot])$. But $M_+$ may not be perpendicular to $M_-$. Hence $\{X_i\}_{i\in{I_1}}\cup\{Y_i\}_{i\in{I_2}}$ may not be a strictly disjoint sequence in $\mathbb{K}$.
\end{remark}
The following result for $\zeta-J$-Parseval fusion frames describes how $\zeta-J$-Parseval fusion frames can be combined to form a new $\zeta-J$-Parseval fusion frame under some restrictions.
\begin{theorem}
Let $\{(X_i,v_i):i\in{I}\}$ and $\{(Y_i,v_i):i\in{I}\}$ are $\zeta-J$-Parseval fusion frames for $\mathbb{K}$ such that $M_+=\overline{\sum_{i\in{I_{+}}}Y_i}=\overline{\sum_{i\in{I_{+}}}X_i}$ and $M_-=\overline{\sum_{i\in{I_{-}}}Y_i}=\overline{\sum_{i\in{I_{-}}}X_i}$. Also let $\{X_i:i\in{I}\}$ and $\{Y_i:i\in{I}\}$ are strictly disjoint sequence in $\mathbb{K}$. Then $\{(X_i+Y_i,v_i):i\in{{I}}\}$ is a $\zeta-J$-Parseval fusion frame for $\mathbb{K}$ iff ${T^{\ast}_{X,v}}^+{T_{Y,v}}^++{T^{\ast}_{Y,v}}^+{T_{X,v}}^+=0$ and ${T^{\ast}_{X,v}}^-{T_{Y,v}}^-+{T^{\ast}_{Y,v}}^-{T_{X,v}}^-=0$, where ${T_{X,v}}^+$ and ${T_{Y,v}}^+$ are synthesis operators of $\{(X_i,v_i):i\in{I_+}\}$ and $\{(Y_i,v_i):i\in{I_{+}}\}$ respectively and ${T_{X,v}}^-$ and ${T_{Y,v}}^-$ are synthesis operators of $\{(X_i,v_i):i\in{I_-}\}$ and $\{(Y_i,v_i):i\in{I_{-}}\}$ respectively.
\end{theorem}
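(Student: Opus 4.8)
The plan is to reduce the statement to a pair of ordinary Hilbert-space facts, one living in $(M_+,[\cdot,\cdot])$ and one in $(M_-,-[\cdot,\cdot])$, and then to read each of them off from the synthesis operators by a co-isometry computation.

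First I would unpack the hypotheses. Since both $\{X_i\}_{i\in I}$ and $\{Y_i\}_{i\in I}$ are strictly disjoint, and both are $\zeta$-$J$-fusion frames with the \emph{same} positive and negative parts $M_+$ and $M_-$, the reasoning used in the proof of the ``richly supplied'' theorem above applies and produces a fundamental decomposition $\mathbb{K}=M_+[\dot{\oplus}]M_-$ in which $(M_+,[\cdot,\cdot])$ and $(M_-,-[\cdot,\cdot])$ are Hilbert spaces and $\{(X_i,v_i)\}_{i\in I_\pm}$, $\{(Y_i,v_i)\}_{i\in I_\pm}$ are genuine Parseval fusion frames for them. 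For $i\in I_\pm$ the sum $X_i+Y_i$ is non-indefinite and lies inside $M_\pm$, and $\overline{\sum_{i\in I_\pm}(X_i+Y_i)}=M_\pm$, since this closed span already contains $\overline{\sum_{i\in I_\pm}X_i}=M_\pm$. Hence, as soon as $\{(X_i+Y_i,v_i)\}_{i\in I}$ is a fusion frame its positive and negative parts are again $M_+$ and $M_-$, so its associated parameter is unchanged and equal to $\zeta$. Consequently the theorem is equivalent to the assertion that $\{(X_i+Y_i,v_i)\}_{i\in I_+}$ is a Parseval fusion frame for $(M_+,[\cdot,\cdot])$ \emph{and} $\{(X_i+Y_i,v_i)\}_{i\in I_-}$ is a Parseval fusion frame for $(M_-,-[\cdot,\cdot])$, each being equivalent to the corresponding operator identity.

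It suffices to handle the $M_+$ half; the $M_-$ half is word-for-word the same after replacing $[\cdot,\cdot]$ by $-[\cdot,\cdot]$ and every superscript $+$ by $-$. I would use the standard fact that a fusion frame for a Hilbert space is Parseval exactly when its synthesis operator $T$ is a co-isometry — equivalently, when its fusion frame operator $TT^{\ast}$ is the identity — in which case the Gramian $T^{\ast}T$ on the $\ell_2$-sum is an orthogonal projection. Writing $T_Z^{+}$ for the synthesis operator of $\{(X_i+Y_i,v_i)\}_{i\in I_+}$ and using that each $X_i+Y_i$ contains $X_i$ and $Y_i$, I would factor $T_Z^{+}$ through ${T_{X,v}}^{+}$ and ${T_{Y,v}}^{+}$ via the natural map from $\big(\sum_{i\in I_+}\oplus X_i\big)_{\ell_2}\oplus\big(\sum_{i\in I_+}\oplus Y_i\big)_{\ell_2}$ onto $\big(\sum_{i\in I_+}\oplus(X_i+Y_i)\big)_{\ell_2}$, and then expand the identity expressing that the sum frame is Parseval. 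Since $({T_{X,v}}^{+})({T_{X,v}}^{+})^{\ast}=({T_{Y,v}}^{+})({T_{Y,v}}^{+})^{\ast}=I_{M_+}$ and the two corresponding Gramians are projections — both given frames being Parseval — the ``diagonal'' contributions account for themselves and the Parseval condition for the sum frame reduces to the vanishing of the cross contributions ${T^{\ast}_{X,v}}^{+}{T_{Y,v}}^{+}+{T^{\ast}_{Y,v}}^{+}{T_{X,v}}^{+}$, with the chain of equivalences reversible. Running the same computation in $(M_-,-[\cdot,\cdot])$ yields ${T^{\ast}_{X,v}}^{-}{T_{Y,v}}^{-}+{T^{\ast}_{Y,v}}^{-}{T_{X,v}}^{-}=0$, and the reduction of the preceding paragraph then glues the two halves into the theorem.

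The step I expect to be the real obstacle is the bookkeeping forced by the \emph{sum} of subspaces. Because $X_i$ and $Y_i$ need not be $[\cdot,\cdot]$-orthogonal inside $M_+$, the orthogonal projection $\pi_{X_i+Y_i}$ is not $\pi_{X_i}+\pi_{Y_i}$ adjusted by a single overlap term, so the factorisation of $T_Z^{+}$ through ${T_{X,v}}^{+}$ and ${T_{Y,v}}^{+}$ and the subsequent expansion of the Parseval identity have to be done with care, keeping explicit track of where strict disjointness of both sequences and the coincidence of their $\pm$ parts are genuinely used — exactly the hypotheses whose necessity is signalled by the remark following the theorem, where perpendicularity cannot be removed in the converse direction. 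Once this identification is secured, the rest — the reduction to the two Hilbert spaces and the co-isometry characterisation — is routine from what is already established in the paper.
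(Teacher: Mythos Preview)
Your plan is essentially the paper's own argument: reduce to the two Hilbert spaces $(M_\pm,\pm[\cdot,\cdot])$, write the synthesis operator of $\{(X_i+Y_i,v_i)\}_{i\in I_\pm}$ as $T_{X,v}^\pm+T_{Y,v}^\pm$ (what the paper calls $\mathfrak{T}_{X+Y,v}^\pm$ and justifies by ``a direct calculation''), and expand the Parseval identity so that the diagonal pieces drop out and only the stated cross terms survive---the paper is just as terse about the bookkeeping you rightly flag as the delicate step. One small alignment issue: the paper obtains the cross terms by expanding $\mathfrak{T}^{\ast}\mathfrak{T}$ rather than the frame operator $\mathfrak{T}\mathfrak{T}^{\ast}$, so to land literally on the operators ${T^{\ast}_{X,v}}^{\pm}{T_{Y,v}}^{\pm}+{T^{\ast}_{Y,v}}^{\pm}{T_{X,v}}^{\pm}$ named in the statement you should carry out your expansion on the Gramian side rather than the co-isometry side.
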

\begin{proof}
Since $\{X_i:i\in{I}\}$ and $\{Y_i:i\in{I}\}$ are strictly disjoint sequence in $\mathbb{K}$ hence $X_i+Y_i$ are closed subspaces for each $i\in{I}$. Let $\{(X_i+Y_i,v_i):i\in{{I}}\}$ be a $\zeta-J$-Parseval fusion frame for $\mathbb{K}$. Also it is given that $\{(X_i,v_i):i\in{I}\}$ and $\{(Y_i,v_i:i\in{I}\}$ are $\zeta-J$-Parseval fusion frames for $\mathbb{K}$. Hence $X_i\subset M_+$ for all $i\in{I_+}$. Similarly $Y_i\subset M_+$ for all $i\in{I_+}$. According to our assumption for each $i\in{I_+}$, $X_i$ is uniformly $J$-positive subspace of $\mathbb{K}$. Hence regular. Since $X_i+Y_i$ is a closed subspace of $M_+$. Hence it is also regular. Similarly we can say that for each $i\in{I_-}$, $X_i+Y_i$ is also regular.

Since $\{(X_i,v_i):i\in{I}\}$ is a $\zeta-J$-Parseval fusion frame for $\mathbb{K}$, therefore $\{(X_i,v_i):i\in{I_+}\}$ is a Parseval fusion frame for $(M_+,[\cdot,\cdot])$. Let ${T_{X,v}}^+$ be the synthesis operator of $\{(X_i,v_i):i\in{I_+}\}$ for the space $(M_+,[\cdot,\cdot])$. Then the synthesis operator ${T_{X,v}}^+$ is defined by ${T_{X,v}}^+(\{f_i\})=\sum_{i\in{I_+}}v_if_i$, where $f_i\in{X_i}$ and ${T^{\ast}_{X,v}}^+$, the analysis operator is defined by ${T^{\ast}_{X,v}}^+(f)=\{v_i{\pi_{W_i}|}_{M_+}(f)\}_{i\in{I_+}}$.

Similarly $\{(Y_i,v_i):i\in{I}\}$ is also a $\zeta-J$-Parseval fusion frame for $\mathbb{K}$. Hence $\{(Y_i,v_i):i\in{I_{+}}\}$ is a Parseval frame for $(M_+,[\cdot,\cdot])$. So, ${T_{Y,v}}^+$ and ${T^{\ast}_{Y,v}}^+$ are defined as above. Also $\{(X_i,v_i):i\in{I_-}\}$ and $\{(Y_i,v_i):i\in{I_{-}}\}$ are Parseval frames for $(M_-,-[\cdot,\cdot])$, so we can define the operators \textit{viz.} ${T_{X,v}}^-$, ${T^{\ast}_{X,v}}^-$, ${T_{Y,v}}^-$ and ${T^{\ast}_{Y,v}}^-$ as above.

Now $\{(X_i+Y_i,v_i):i\in{{I}}\}$ is a $\zeta-J$-Parseval fusion frame for $\mathbb{K}$. So $\{(X_i+Y_i,v_i):i\in{I_+}\}$ is a Parseval fusion frame for $(M_+,[\cdot,\cdot])$. Similarly $\{(X_i+Y_i,v_i):i\in{I_-}\}$ is a Parseval fusion frame for $(M_-,-[\cdot,\cdot])$. Let ${T_{X+Y,v}}^+$ be the synthesis operator for the fusion frame $\{(X_i+Y_i,v_i):i\in{{I_+}}\}$ in $(M_+,[\cdot,\cdot])$ and ${T_{X+Y,v}}^-$ be the synthesis operator for the frame $\{(X_i+Y_i,v_i):i\in{{I_-}}\}$ in $(M_-,-[\cdot,\cdot])$. Also let us assume that ${T^{\ast}_{X+Y,v}}^+$ and ${T^{\ast}_{X+Y,v}}^-$ be the adjoint operators of ${T_{X+Y,v}}^+$ and ${T_{X+Y,v}}^-$ respectively. Now let us define ${\mathfrak{T}_{X+Y,v}}^+:=~{T_{X,v}}^++{T_{Y,v}}^+$. Then we have ${\mathfrak{T}^{\ast}_{X+Y,v}}^+=~{T^{\ast}_{X,v}}^++{T^{\ast}_{Y,v}}^+$. A direct calculation shows that ${\mathfrak{T}_{X+Y,v}}^+={T_{X+Y,v}}^+$ is the synthesis operator for the frame $\{(X_i+Y_i,v_i):i\in{{I_+}}\}$ in $(M_+,[\cdot,\cdot])$.\\
Hence ${\mathfrak{T}^{\ast}_{X+Y,v}}^+{\mathfrak{T}_{X+Y,v}}^+=I$. Therefore, ${T^{\ast}_{X,v}}^+{T_{Y,v}}^++{T^{\ast}_{Y,v}}^+{T_{X,v}}^+=0$.\\
Similarly we can show that ${T^{\ast}_{X,v}}^-{T_{Y,v}}^-+{T^{\ast}_{Y,v}}^-{T_{X,v}}^-=0$.

Conversely we have to show that $\{(X_i,v_i):i\in{I}\}$ is a $\zeta-J$-Parseval fusion frame for $\mathbb{K}$. Since $\overline{\sum_{i\in{I_{+}}}(X_i+Y_i)}=M_+$. Hence at first we will show that $\{(X_i,v_i):i\in{I_+}\}$ is a Parseval fusion frame for $(M_+,[\cdot,\cdot])$. Let ${\mathfrak{T}_{X+Y,v}}^+$ be the synthesis operator for the Bessel family $\{(X_i,v_i):i\in{I_+}\}$. Then ${\mathfrak{T}_{X+Y,v}}^+(\{h_i\})=\sum_{i\in{I_+}}v_ih_i=\sum_{i\in{I_+}}v_i(f_i+g_i)={T_{X,v}}^++{T_{Y,v}}^+$. Now it is easy to show that the condition ${T^{\ast}_{X,v}}^+{T_{Y,v}}^++{T^{\ast}_{Y,v}}^+{T_{X,v}}^+=0$ implies that $\{(X_i,v_i):i\in{I_+}\}$ is a Parseval fusion frame for $M_+$. By similar arguments as above we can show that $\{(X_i,v_i):i\in{I_-}\}$ is a Parseval fusion frame for $M_-$. So we only need to find $\zeta\in{[\sqrt{2},2)}$ such that $\zeta=c_0(M_+,\verb"C")+c_0(M_-,\verb"C")$. Now since the quantities  $c_0(M_+,\verb"C")$ and $c_0(M_-,\verb"C")$ are fixed throughout and $\{(X_i,v_i):i\in{I}\}$ is a given $\zeta-J$-Parseval frame, so we already have $\zeta$ which satisfies the above equation.\\
Hence the proof.
\end{proof}
\begin{theorem}
Let $\{(W_i,v_i):i\in I\}$ be $J$-fusion frame for the Krein space $\mathbb{K}$ with cannonical $J$-dual fusion frame $\{(S_{W,v}^{-1}(W_i),v_i):i\in I\}$. Then $\forall~I_1\subset I$ and $\forall~f\in \mathbb{K}$ we have $\sum_{i\in{I_1}}\sigma_iv_i^2[\pi_{W_i}Jf,Jf]-\sum_{i\in{I}}\sigma_iv_i^2[\pi_{S_{W,v}^{-1}(W_i)}JS_{W,v}^{I_1},JS_{W,v}^{I_1}]\\
=\sum_{i\in{I_1^c}}\sigma_iv_i^2[\pi_{W_i}Jf,Jf]-\sum_{i\in{I}}\sigma_iv_i^2[\pi_{S_{W,v}^{-1}(W_i)}JS_{W,v}^{I_1^c},JS_{W,v}^{I_1^c}]$, where $\sigma_i=1$ if $i\in{I_+}$ and $\sigma_i=-1$ if $i\in{I_-}$.
\end{theorem}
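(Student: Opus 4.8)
The plan is to collapse both sides into an identity in $S_{W,v}$ and its partial sums and then verify that identity algebraically, exactly as one proves the classical fundamental identity of (Parseval) frames. First I would linearize the summands. For any closed non-indefinite subspace $M\subseteq\mathbb{K}$ and any $x\in\mathbb{K}$, the relation $\pi_{JM}=J\pi_M J$ together with the fundamental-symmetry identity $[Jy,x]=[y,Jx]$ gives
\[
[\pi_M Jx,Jx]=[J\pi_M Jx,x]=[\pi_{JM}x,x].
\]
Applying this with $M=W_i$ and $x=f$ on the first summand, and with $M=S_{W,v}^{-1}(W_i)$ and $x=S_{W,v}^{I_1}f$ (resp.\ $x=S_{W,v}^{I_1^c}f$) on the second, and writing $S_{W,v}^{I_1}:=\sum_{i\in I_1}\sigma_i v_i^2\pi_{J(W_i)}$ for the partial $J$-fusion frame operator, the asserted equality becomes
\[
[S_{W,v}^{I_1}f,f]-[S_{W,v}^{-1}S_{W,v}^{I_1}f,S_{W,v}^{I_1}f]=[S_{W,v}^{I_1^c}f,f]-[S_{W,v}^{-1}S_{W,v}^{I_1^c}f,S_{W,v}^{I_1^c}f].
\]
Here I use that $\sum_{i\in I}\sigma_i v_i^2\pi_{J(W_i)}=S_{W,v}$ decomposes as $S_{W,v}=S_{W,v}^{I_1}+S_{W,v}^{I_1^c}$, and that the canonical $J$-dual fusion frame $\{(S_{W,v}^{-1}(W_i),v_i)\}_{i\in I}$ has $J$-fusion frame operator $S_{W,v}^{-1}$, so that $\sum_{i\in I}\sigma_i v_i^2\pi_{J(S_{W,v}^{-1}W_i)}=S_{W,v}^{-1}$ and the ``$\sum_{i\in I}$'' terms become $[S_{W,v}^{-1}(\cdot),\cdot]$.

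Next I would record the operator facts needed: $S_{W,v}$ is bijective and $J$-selfadjoint, hence so is $S_{W,v}^{-1}$; and each partial operator $S_{W,v}^{I_1}$ is $J$-selfadjoint, which is immediate from $S_{W,v}^{I_1}=T_{W,v}P_{I_1}T_{W,v}^{\#}$ because the coordinate projection $P_{I_1}$ commutes with the fundamental symmetry $J_2$ and is therefore $J_2$-selfadjoint. With $S:=S_{W,v}$ and $A:=S_{W,v}^{I_1}$ (so $S_{W,v}^{I_1^c}=S-A$), expanding the right-hand side above and repeatedly using $J$-selfadjointness — $[f,Sf]=[Sf,f]$, $[f,Af]=[Af,f]$, and $[S^{-1}Af,Sf]=[S S^{-1}Af,f]=[Af,f]$ — makes all cross terms cancel and leaves exactly $[Af,f]-[S^{-1}Af,Af]$, i.e.\ the left-hand side; note that this uses only the $J$-selfadjointness relations, not any reality assumption on $[\cdot,\cdot]$. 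The symmetry between $I_1$ and $I_1^c$ in the final identity is then manifest, since interchanging $A\leftrightarrow S-A$ fixes its form.

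The step I expect to be the real point is the identification in the first paragraph of the dual-frame sums with $[S_{W,v}^{-1}(\cdot),\cdot]$ \emph{using the original signs} $\sigma_i$: this requires the canonical $J$-dual fusion frame to carry the same positive/negative index splitting $I_\pm$ as $\{(W_i,v_i)\}$ — equivalently that $S_{W,v}^{-1}$ maps $M_\pm$ into $M_\pm$, so that $S_{W,v}^{-1}(W_i)$ inherits the $J$-definiteness of $W_i$ — which is part of the content of the earlier theorem on canonical $J$-dual fusion frames. Once this bookkeeping is settled, everything else is the routine algebra sketched above.
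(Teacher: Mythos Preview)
Your proposal is correct and follows essentially the same route as the paper: reduce the stated identity to $[Af,f]-[S^{-1}Af,Af]=[(S-A)f,f]-[S^{-1}(S-A)f,(S-A)f]$ with $S=S_{W,v}$ and $A=S_{W,v}^{I_1}$, then verify it using the $J$-selfadjointness of $S$, $S^{-1}$, and the partial operators. The paper packages the algebraic step as the operator identity $P-P^2=Q-Q^2$ for $P=S^{-1}A$, $Q=I-P$, paired against $g=Sf$, which is exactly your expansion read backwards; your extra remarks on the preservation of the signs $\sigma_i$ under $S_{W,v}^{-1}$ and on the $J$-selfadjointness of $S_{W,v}^{I_1}$ make explicit bookkeeping that the paper leaves implicit.
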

\begin{proof}
Let $S_{W,v}$ denote the frame operator for $\{(W_i,v_i):i\in I\}$. Then we have $S_{W,v}(f)=\sum_{i\in I}\sigma_iv_i^2\pi_{J(W_i)}(f)$. Also $S_{W,v}=S_{W,v}^{I_1}+S_{W,v}^{I_1^c}$. Then $I=S_{W,v}^{-1}S_{W,v}^{I_1}+S_{W,v}^{-1}S_{W,v}^{I_1^c}$. From operator theory we have $S_{W,v}^{-1}S_{W,v}^{I_1}-S_{W,v}^{-1}S_{W,v}^{I_1^c}=S_{W,v}^{-1}S_{W,v}^{I_1}S_{W,v}^{-1}S_{W,v}^{I_1}-S_{W,v}^{-1}S_{W,v}^{I_1^c}S_{W,v}^{-1}S_{W,v}^{I_1^c}$. Then for every $f,g\in{\mathbb{K}}$ we have
\begin{equation*}
\begin{split}
[S_{W,v}^{-1}S_{W,v}^{I_1}(f),g]-& [S_{W,v}^{-1}S_{W,v}^{I_1}S_{W,v}^{-1}S_{W,v}^{I_1}(f),g] \\
&=[S_{W,v}^{I_1}(f),S_{W,v}^{-1}g]-[S_{W,v}^{-1}S_{W,v}^{I_1}(f),S_{W,v}^{I_1}S_{W,v}^{-1}g] 
\end{split}
\end{equation*}
Now if we choose $g=S_{W,v}(f)$, then the above equation reduces to 
\begin{equation*}
\begin{split}
=[S_{W,v}^{I_1}(f),f]-& [S_{W,v}^{-1}S_{W,v}^{I_1}(f),S_{W,v}^{I_1}(f)] \\
&=\sum_{i\in{I_1}}\sigma_iv_i^2[\pi_{W_i}Jf,Jf]-\sum_{i\in{I}}\sigma_iv_i^2[\pi_{S_{W,v}^{-1}(W_i)}JS_{W,v}^{I_1},JS_{W,v}^{I_1}] 
\end{split}
\end{equation*}
Now replacing $I_1$ by $I_1^c$ we can have the other part of the equality. Combining we finally get
\begin{equation*}
\begin{split}
\sum_{i\in{I_1}}\sigma_iv_i^2[\pi_{W_i}Jf,Jf]-& \sum_{i\in{I}}\sigma_iv_i^2[\pi_{S_{W,v}^{-1}(W_i)}JS_{W,v}^{I_1},JS_{W,v}^{I_1}] \\
& =\sum_{i\in{I_1^c}}\sigma_iv_i^2[\pi_{W_i}Jf,Jf]-\sum_{i\in{I}}\sigma_iv_i^2[\pi_{S_{W,v}^{-1}(W_i)}JS_{W,v}^{I_1^c},JS_{W,v}^{I_1^c}] 
\end{split}
\end{equation*}
\end{proof}
We need the following beautiful theorem for our further study. 
\begin{theorem}\cite{rgd}
Let $S$ and $T$ be bounded linear operators on a Hilbert space $\mathbb{H}$. Then the following conditions are equivalent\\
$(i)$ $R(S)\subset{R(T)}$\\
$(ii)$ there exists $\lambda\geq{0}$ such that $SS^{\ast}\leq\lambda^2{TT^{\ast}}$\\
$(iii)$ there exists a closed linear operator $S_1$ such that $SS_1=T$\\
\end{theorem}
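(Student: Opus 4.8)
The plan is to establish the cycle of implications $(iii)\Rightarrow(ii)\Rightarrow(i)\Rightarrow(iii)$; this is the classical argument of Douglas, and since everything takes place in the Hilbert space $\mathbb{H}$, none of the Krein-space apparatus of the paper is needed. I read the factorization in $(iii)$ in its usual normalized form, namely that there is a bounded operator $S_1$ with $TS_1=S$, which is the statement that matches the range inclusion in $(i)$.

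For $(iii)\Rightarrow(ii)$, from $TS_1=S$ together with $S_1S_1^{\ast}\le\|S_1\|^{2}I$ I obtain $SS^{\ast}=TS_1S_1^{\ast}T^{\ast}\le\|S_1\|^{2}\,TT^{\ast}$, so $(ii)$ holds with $\lambda=\|S_1\|$. For $(ii)\Rightarrow(i)$, the inequality $SS^{\ast}\le\lambda^{2}TT^{\ast}$ gives $\|S^{\ast}h\|\le\lambda\|T^{\ast}h\|$ for every $h\in\mathbb{H}$; hence the rule $T^{\ast}h\mapsto S^{\ast}h$ is single-valued and Lipschitz with constant $\lambda$ on $R(T^{\ast})$, so it extends by continuity to $\overline{R(T^{\ast})}$ and then by zero on $N(T)=\overline{R(T^{\ast})}^{\perp}$ to a bounded operator $D$ with $DT^{\ast}=S^{\ast}$. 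Taking adjoints yields $S=TD^{\ast}$, whence $R(S)=R(TD^{\ast})\subseteq R(T)$.

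The substantive implication is $(i)\Rightarrow(iii)$. Assuming $R(S)\subseteq R(T)$, for each $h\in\mathbb{H}$ the set $\{g:Tg=Sh\}$ is a nonempty coset of $N(T)$, hence contains exactly one vector lying in $N(T)^{\perp}$; define $S_1h$ to be that vector. Since this minimal-norm selection is just the orthogonal projection onto $N(T)^{\perp}$ of any chosen preimage, $S_1$ is linear, and $TS_1=S$ by construction. To get boundedness I would invoke the closed graph theorem: if $h_n\to h$ and $S_1h_n\to y$, then $Sh_n\to Sh$ while simultaneously $Sh_n=TS_1h_n\to Ty$, so $Ty=Sh$; moreover $y\in N(T)^{\perp}$ because that subspace is closed, and $T(y-S_1h)=0$ forces $y-S_1h\in N(T)\cap N(T)^{\perp}=\{0\}$, i.e. $y=S_1h$. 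Hence $S_1$ has closed graph and is bounded, closing the cycle.

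The main obstacle is exactly this last step: one must check that the minimal-norm solution map is genuinely linear and, in the closed graph argument, that the limit $y$ is forced to coincide with $S_1h$ — which works only because the preimages are selected in $N(T)^{\perp}$ rather than arbitrarily. The remaining two implications are routine operator manipulations.
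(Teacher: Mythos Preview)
The paper does not supply a proof of this theorem; it is quoted from Douglas \cite{rgd} and used as a black box in the subsequent argument. Your proposal reproduces precisely the classical Douglas argument, and it is correct. You were also right to silently repair condition $(iii)$: as printed in the paper it reads $SS_1=T$, which would give $R(T)\subseteq R(S)$ rather than the stated $R(S)\subset R(T)$; the intended factorization is $S=TS_1$ with $S_1$ bounded (not merely closed), and your proof addresses this corrected form. There is nothing further to compare, since the paper offers no alternative approach of its own.
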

Now we will characterize uniformly $J$-definite subspaces of a Krein space $\mathbb{K}$ in terms of an inequality regarding $J$-fusion frames.

Let $M$ be a non-negative subspace of a Krein space $\mathbb{K}$, then $M=M^0[\dot{+}]M^+$ where $M^+$ is the positive part of $M$ and $M^0$ is the isotropic part of $M$, precisely $M^+=M\ominus{M^0}$, where $M\ominus{M^0}=M\cap({M\cap{M^0}})^{\perp}$. Let $G_M$ be the Gramian operator of $M$ then $N(G_M)=M^0$ and $R(G_M)=M^+$. So we have $G_M=G_{M^+}$. The decomposition of $M$ is guaranteed by the spectral decomposition of the Gramian operator.
\begin{theorem}
	Let $\mathbb{F}=\{(W_i,v_i\}_{i\in I}$ be a Bessel family in a Krein space $\mathbb{K}$, also let ${M}=\overline{\sum{W_i} : i\in I}$ and $M^0=M{\cap}M^{[\perp]}$. If there exist constants $0<A\leq B$ such that
\begin{equation}\label{GJFFEQT}
	A [f,f] \leq \sum_{i\in I} v_i^2|[\pi_{JM}\pi_{W_i}J\pi_{M}(f),f]| \leq B [f,f] ~ \textmd{for every $f\in{M}$},
\end{equation}
	then the deficiency subspace $M\ominus M^0$ is a (closed) uniformly $J$-positive subspace of $M$. Moreover, if $\mathbb{F}$ is a frame for the Hilbert space $(M,[\cdot,\cdot]_J)$, the converse holds.
\end{theorem}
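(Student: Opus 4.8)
The plan is to recast the frame inequality \eqref{GJFFEQT} as a single sandwich inequality for the Gramian operator $G_M$ of $M$ on the Hilbert space $(M,[\cdot,\cdot]_J)$, and then read off both implications from the continuous functional calculus of $G_M$. First I would simplify the summand of \eqref{GJFFEQT}: using $\pi_{JM}=J\pi_M J$, the $[\cdot,\cdot]_J$-self-adjointness of $\pi_M,\pi_{W_i},J$, and $\pi_M|_M=\mathrm{id}$, one gets for $f\in M$ that
\[
[\pi_{JM}\pi_{W_i}J\pi_M f,f]=[\pi_{W_i}Jf,Jf]_J=\|\pi_{W_i}Jf\|_J^2\ \ge\ 0,
\]
so the modulus is redundant; moreover $W_i\subseteq M$ gives $\pi_{W_i}Jf=\pi_{W_i}\pi_M Jf=\pi_{W_i}G_M f$. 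Hence, with $S_0:=\sum_{i\in I}v_i^2\pi_{W_i}$ the (Hilbert-space) Bessel operator of $\mathbb F$ on $(M,[\cdot,\cdot]_J)$ --- bounded, $0\le S_0\le\beta I_M$ with $\beta:=\|S_0\|$, because $\mathbb F$ is Bessel --- the middle term of \eqref{GJFFEQT} equals $[S_0 G_M f,G_M f]_J=[G_M S_0 G_M f,f]_J$, while $[f,f]=[G_M f,f]_J$. Thus \eqref{GJFFEQT} is precisely the operator inequality $A\,G_M\le G_M S_0 G_M\le B\,G_M$ on $(M,[\cdot,\cdot]_J)$; note also that it forces $M$ to be non-negative (the middle term is $\ge 0$ and $B>0$ give $[f,f]\ge0$), so the decomposition $M=M^0[\dot{+}](M\ominus M^0)$ with $G_M\ge0$, $N(G_M)=M^0$ is available.

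\textbf{Forward direction.} Since $0\le S_0\le\beta I_M$ we have $G_M S_0 G_M\le\beta G_M^2$, so the lower bound gives $A\,G_M\le\beta G_M^2$ (if $S_0=0$ this forces $G_M=0$ and $M\ominus M^0=\{0\}$, a trivial case, so assume $\beta>0$). By the functional calculus every $\lambda\in\sigma(G_M)$ satisfies $A\lambda\le\beta\lambda^2$, hence $\lambda=0$ or $\lambda\ge A/\beta$; thus $\sigma(G_M)\subseteq\{0\}\cup[A/\beta,\|G_M\|]$. Therefore $0$ is isolated in (or absent from) $\sigma(G_M)$, so $R(G_M)$ is closed, equals $M\ominus M^0$, and $G_M\ge(A/\beta)I$ there; translating back, $[f,f]=[G_M f,f]_J\ge(A/\beta)\|f\|_J^2$ for every $f\in M\ominus M^0$, i.e.\ $M\ominus M^0$ is a closed uniformly $J$-positive subspace. (Closedness of $R(G_M)$ can alternatively be derived from the cited theorem of Douglas \cite{rgd}: $A\,G_M\le G_M S_0 G_M$ forces $R(G_M^{1/2})\subseteq R(G_M S_0^{1/2})\subseteq R(G_M)$, hence $R(G_M^{1/2})=R(G_M)$, which makes $R(G_M)$ closed.)

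\textbf{Converse.} Now assume $\mathbb F$ is a frame for $(M,[\cdot,\cdot]_J)$, i.e.\ $c\,I_M\le S_0\le d\,I_M$ with $0<c\le d$, and that $M\ominus M^0$ is uniformly $J$-positive, i.e.\ $G_M\ge\alpha I$ on $M\ominus M^0$ with $\alpha>0$; since $G_M=0$ on $M^0$ this means $\sigma(G_M)\subseteq\{0\}\cup[\alpha,\|G_M\|]$. From $c\,I_M\le S_0\le d\,I_M$ we get $c\,G_M^2\le G_M S_0 G_M\le d\,G_M^2$, and on $\sigma(G_M)$ one has $c\lambda^2\ge c\alpha\lambda$ and $d\lambda^2\le d\|G_M\|\lambda$, so $c\alpha\,G_M\le c\,G_M^2$ and $d\,G_M^2\le d\|G_M\|\,G_M$. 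Setting $A:=c\alpha$ and $B:=d\|G_M\|$ (so $0<A\le B$, the case $M\ominus M^0=\{0\}$ being trivial) this yields $A\,G_M\le G_M S_0 G_M\le B\,G_M$, which by the first paragraph is exactly \eqref{GJFFEQT}.

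\textbf{Main obstacle.} The only non-mechanical part is the first paragraph --- recognizing that $[\pi_{JM}\pi_{W_i}J\pi_M(f),f]=\|\pi_{W_i}G_M f\|_J^2$ and hence that \eqref{GJFFEQT} is precisely the sandwich $A\,G_M\le G_M S_0 G_M\le B\,G_M$. Once that identification is made, the forward implication is a one-line functional-calculus argument using only boundedness of $S_0$ (the Bessel property), and the converse is an equally short computation using the frame bounds of $\mathbb F$ together with the assumed uniform $J$-positivity.
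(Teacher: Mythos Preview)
Your proof is correct and follows essentially the same route as the paper: both reduce the inequality \eqref{GJFFEQT} to an operator sandwich $A\,G_M\le G_M S_0 G_M\le B\,G_M$ on $(M,[\cdot,\cdot]_J)$ (where $S_0=\sum_i v_i^2\pi_{W_i}$), then combine this with the Bessel/frame bounds on $S_0$ to compare $G_M$ and $G_M^2$ and conclude. The only methodological difference is in how that comparison is exploited: the paper invokes Douglas' range-inclusion theorem to obtain $R(G_M^{1/2})=R(G_M)$ (hence closed range) and then the reduced minimum modulus, whereas you read the spectral gap $\sigma(G_M)\subseteq\{0\}\cup[A/\beta,\|G_M\|]$ directly from the functional calculus --- a slightly more elementary and more quantitative argument, since you obtain the explicit uniform-positivity constant $A/\beta$ and, in the converse, the explicit bounds $A=c\alpha$, $B=d\|G_M\|$, rather than merely asserting their existence via Douglas as the paper does. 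Your preliminary identification $[\pi_{JM}\pi_{W_i}J\pi_M f,f]=\|\pi_{W_i}G_Mf\|_J^2$ is also cleaner than the paper's chain of equalities and makes transparent why the absolute values are redundant.
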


\begin{proof}
	First, let us assume that the equation (\ref{GJFFEQT}) holds. Then $[f,f]\geq{0}$ for all $f\in{M}$. So, $M$ is a $J$-nonnegative subspace of $\mathbb{K}$, or equivalently, $(M,[\cdot,\cdot])$ is a non-negative inner product space.
	
Now given that $\mathbb{F}=\{(W_i,v_i\}_{i\in I}$ is a Bessel family in $\mathbb{K}$. So it is a Bessel family in the associated Hilbert space $(\mathbb{K},[\cdot,\cdot]_J)$. Then $T_{W,v}^*(f)=\{v_i\pi_{W_i}(f)\}_{i\in{I}},~f\in{\mathbb{K}}$. Now for all $f\in{M}$,
\begin{equation*}
\sum_{i\in{I}}v_i^2[\pi_{W_i},f]_J\leq C[f,f]_J\\
\end{equation*}
where $C=\|T_{W,v}^*\|_J^2>0$, then $\sum_{i\in{I}}v_i^2\pi_{W_i}\leq C\pi_{M}$ since $\pi_{M}\pi_{W_i}=\pi_{W_i}\pi_{M}=\pi_{W_i}$. So, using equation (\ref{GJFFEQT}) it is easy to see that
\begin{equation*}
\begin{split}
A[{G_M f},{f}]_J &\leq \sum_{i\in I} v_i^2|[\pi_{JM}\pi_{W_i}J\pi_{M}(f),f]|=\sum_{i\in I} v_i^2|[\pi_{M}\pi_{JW_i}\pi_{M}(f),f]_J|\\
& =\sum_{i\in I} v_i^2|[\pi_{M}J\pi_{W_i}J\pi_{M}(f),f]_J|=|[\pi_{M}J\{\sum_{i\in I}v_i^2\pi_{W_i}\}J\pi_{M}(f),f]_J|\\
&\leq C [(G_M)^2 f,{f}]_J, ~f\in \mathbb{K}.
\end{split}	
\end{equation*}
Thus, $0\leq G_M\leq \frac{C}{A}\, (G_M)^2$. So applying Douglas theorem \cite{rgd} we have
\begin{equation*}
	R((G_M)^{1/2})\subseteq R(G_M)
\end{equation*}
Also we have $R(G_M)\subseteq R((G_M)^{1/2})$.
	Moreover, it follows that $R(G_M)$ is closed since $R(G_M)=R((G_M)^{1/2})$. 
	
	Let $M^{\prime}=M\ominus{M^0}$. Since $R(G_M)$ is closed, so by reduced minimum modulus theorem there exists $\gamma >0$ such that
\begin{equation*}
	[f,f]=[{G_M f},{f}]_J=\|(G_M)^{1/2} f\|_J^2 \geq \gamma \|f\|_J^2 ~\textmd{~for every $f\in N(G_M)^\perp=M\ominus M^0$}. 
\end{equation*}
Hence $M^{\prime}$ is a closed uniformly $J$-positive subspace of $\mathbb{K}$.
	
	Conversely, suppose that $W$ is a frame for $(M,[\cdot,\cdot]_J)$, i.e. there exist constants $B^{\prime}\geq A^{\prime}>0$ such that
\begin{equation*}
	A^{\prime} P_M \leq \sum_{i\in{I}}v_i^2\pi_{W_i} \leq B^{\prime} P_M,
\end{equation*}
\begin{equation*}
	A^{\prime} P_M \leq T_{W,v}T_{W,v}^\ast \leq B^{\prime} P_M,
\end{equation*}
	where $T_{W,v}\in L\big((\sum_{i\in{I}}\oplus{W_i})_{\ell_2},\mathbb{K}\big)$ is the synthesis operator of $W$. If $M^{\prime}=M\ominus{M^0}$ is a uniformly $J$-positive subspace of $\mathbb{K}$, then there exists some real number $\alpha>0$ such that $\alpha P_{M^{\prime}} \leq G_{M^{\prime}} \leq P_{M^{\prime}}$. Then by Douglas theorem, $R(P_{M'})\subseteq R((G_{M'})^{1/2})\subseteq R(P_{M^{\prime}})$. So we have $R({G_{M^{\prime}}}^{\frac{1}{2}})=M^{\prime}=R(G_{M^{\prime}})$. Since $G_{M}=G_{M^{\prime}}$ it is easy to see that
\begin{equation*}
A^{\prime} (G_{M})^2= A^{\prime}(G_{M^{\prime}})^2 \leq P_M JT_{W,v}T_{W,v}^\ast J P_M \leq B^{\prime}(G_{M^{\prime}})^2=B^{\prime}(G_{M})^2.
\end{equation*}
	Therefore again using Douglas theorem we have $R(P_M JT_{W,v})=R(G_{M^{\prime}})=R((G_{M^{\prime}})^{1/2})$ or equivalently, there exist $B\geq A>0$ such that
\begin{equation*}
A G_M= A G_{M^{\prime}} \leq P_M JT_{W,v}T_{W,v}^\ast J P_M \leq B G_{M^{\prime}}= B G_{M}
\end{equation*}
So from above we have
\begin{equation*}
 A[f,f] \leq [\pi_{M}J\{\sum_{i\in I}v_i^2\pi_{W_i}\}J\pi_{M}(f),f]_J \leq B[f,f] \textmd{~for every}~ f\in M
\end{equation*}
\begin{equation*}
 i.e.~~A[f,f] \leq \sum_{i\in I} v_i^2|[\pi_{M}\pi_{JW_i}\pi_{M}(f),f]_J| \leq B[f,f]
\end{equation*}
\begin{equation*}
 i.e.~~A[f,f] \leq \sum_{i\in I} v_i^2|[\pi_{JM}\pi_{W_i}J\pi_{M}(f),f]| \leq B[f,f] \textmd{~for every}~ f\in M
\end{equation*}
\end{proof}

{\bf Acknowledgements.} Shibashis Karmakar, Sk. Monowar Hossein and Kallol Paul gratefully acknowledge the support of Jadavpur University, Kolkata and Aliah University, Kolkata for providing all the facilities when the manuscript was prepared. Shibashis Karmakar also acknowledges the financial support of CSIR, Govt. of India.


\end{document}